\documentclass{amsart}
\usepackage{amsmath}
\usepackage{amsthm,amsfonts,amssymb,mathrsfs}
\usepackage{epic,eepic}
\usepackage{yfonts}
\usepackage{nicefrac}
\usepackage{paralist,enumerate}
\usepackage[all]{xy}

\usepackage{xcolor,graphicx}
\usepackage[mathscr]{euscript}

\theoremstyle{definition}
\newtheorem{mtheorem}{Theorem}
\newtheorem{theorem}{Theorem}[section]
\newtheorem{definition}[theorem]{Definition}
\newtheorem{lemma}[theorem]{Lemma}
\newtheorem{proposition}[theorem]{Proposition}
\newtheorem{conj}[theorem]{"Conjecture"}
\newtheorem{corollary}[theorem]{Corollary}
\newtheorem*{theorem*}{Theorem}

\theoremstyle{remark}
\newtheorem{remark}[theorem]{Remark}

\setlength{\oddsidemargin}{0in}
\setlength{\evensidemargin}{0in}
\setlength{\marginparwidth}{0in}
\setlength{\marginparsep}{0in}
\setlength{\marginparpush}{0in}
\setlength{\topmargin}{0.3in}
\setlength{\headsep}{14pt}
\setlength{\footskip}{.3in}
\setlength{\textheight}{8.0in}
\setlength{\textwidth}{5.8in}
\setlength{\parskip}{4pt}
\linespread{1.2}
\usepackage{comment}

\def\Log{{\rm Log\,}} 

\usepackage{bbm}

\newcommand{\RR}{\mathbb{R}}
\newcommand{\ZZ}{\mathbb{Z}}
\newcommand{\CC}{\mathbb{C}}

\newcommand{\TS}{\mathscr{T}}
\newcommand{\Ccal}{\mathcal{C}}
\newcommand{\FC}{\mathcal{F}}

\begin{document}

\title{Convexity of complements of tropical varieties, and approximations of currents}
\date{}
\author{Karim Adiprasito and Farhad Babaee}
\thanks{Karim Adiprasito is supported by ERC StG 716424 - CASe and ISF Grant 1050/16. Farhad Babaee is supported by the SNSF:PP00P2~150552$\slash$1 Grant.}

\begin{abstract}
The goal of this note is to affirm a local version of conjecture of Nisse--Sottile \cite{Nisse-Sottile} on higher convexity of complements of tropical varieties, while providing a family of counter-examples for the global Nisse--Sottle conjecture in any codimension and dimension higher than 1. Moreover, it is shown that, surprisingly, this family also provides a family of counter-examples for the generalized Hodge conjecture for positive currents in these dimensions, and gives rise to further approximability obstruction. 

\end{abstract}
\maketitle

\section{Introduction}

Following a classical theorem of Aumann \cite{Aumann}, compact convex sets can be characterized using a homological condition on section by linear subspaces of a fixed dimension. 

Gromov \cite{Gromov} intuited from this and subsequent results that such homological conditions for non-compact sets should be thought of as lying on the same spectrum as the Lefschetz section property for smooth compact varieties. Hence, a natural notion of \emph{global $k$-convexity}, for short $k$-convexity, is that any nontrivial homology  $(k-1)$-dimensional cycle in the section of the given set with an affine $k$-plane extends to a nontrivial global homology cycle. He related this to curvature properties of submanifolds, in particular providing a necessary and sufficient condition for a smooth submanifold of Euclidean $n$-dimensional space of dimension $k$ to be $(n-k)$-convex.

The advent of the study of algebraic aspects of tropical and log-geometry brought with it a desire to sink also Lefschetz' topological harpoon into tropical varieties and amoebas and their complements. {Recall that an Archimedean amoeba (or simply an amoeba) of an algebraic variety of $(\CC^*)^n$ is its image under the coordinatewise valuation with $\log|.|$ whereas its non-Archimedean amoeba is its image under a non-Archimedean coordinatewise valuation. Non-Archimedean amoebas can be, in fact, seen as Hausdorff limit of Archimedean amoebas, and  have the structure of a tropical variety, and are also known as realizable tropical varieties. Henriques thereby conjectured that complements of dimension $k$ amoebas in $\RR^n$ are $(n-k)$-convex  \cite{Henriques}}, and was followed by Nisse--Sottile \cite{Nisse-Sottile} who conjectured the $(n-k)$-convexity for complements of all $k$-dimensional tropical varieties of $\RR^n$. Henriques showed a weak form of his conjecture, while Nisse--Sottile showed that the convexity results for Archimedean amoebas pass over the limit, and extend to non-Archimedean amoebas. In consequence, the convexity results of amoebas in the complete intersection case proved by Bushueva-Tsikh in \cite{Bushueva-Tsikh}, as well as the \emph{positive convexity} (See Definition~\ref{pos-convex}) of amoebas proved by \cite{Henriques} also hold for non-Archimedean amoebas. Adiprasito--Bj\"orner in \cite{AB} showed that the smooth $k$-dimensional tropical varieties, introduced by Mikhalkin, and their complements, indeed satisfy the Lefschetz theorem, and are in particular $(n-k)$-convex.

The first main theorem of this paper establishes a local {and a weaker} version (Definition~\ref{def-local}) of the Nisse-Sottile conjecture (See Theorem~\ref{no-caps-cycles} below). {The local convexity of complements of amoebas was previousely shown by Mikhalkin in \cite{Mikh-amoeba}, and in a more general situation by Rashkovskii in \cite{Rash-amoeba} using a powerful result of Fornaess--Sibony in \cite{Fornaess-Sibony}. Our approach to prove Theorem~\ref{intro-local-cov} is similar to that of Rashkovskii.}
\begin{mtheorem}\label{intro-local-cov}
The complement of a tropical variety of dimension $k$ in $\RR^n$ is locally $(n-k)$-convex.
\end{mtheorem}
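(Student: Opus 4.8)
The plan is to reduce the statement to a local model computation near an arbitrary point $x$ of the complement of a tropical variety $V\subset\RR^n$ of pure dimension $k$. Since the conjecture is \emph{local} (Definition~\ref{def-local}), we may work in a small ball $B = B(x,\varepsilon)$ and we only need to show that every nontrivial homology class of degree $n-k-1$ in the intersection $B\cap P$ of such a ball with an affine $(n-k)$-plane $P$ extends to a nontrivial class in $B\setminus V$. The key reduction is that, after a small perturbation, the affine plane $P$ can be assumed to meet $V$ transversely, so that $P\cap V$ is a (possibly empty) polyhedral complex of dimension $\le 0$, i.e.\ a finite set of points; thus $B\cap P\setminus V$ is a punctured $(n-k)$-ball, whose reduced homology in degree $n-k-1$ is free on the punctures. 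So the real content is to produce, for each such puncture $p\in P\cap V$, a nontrivial relative cycle in $B\setminus V$ bounding the small sphere around $p$ inside $P$.

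The strategy for this, following Rashkovskii's approach as the excerpt signals, is to pass through complex/plurisubharmonic geometry. I would realize $V$ locally as the non-Archimedean amoeba of a $k$-dimensional algebraic variety $Z\subset(\CC^*)^n$ (or, more robustly, use the structure theory directly on the tropical side): near the point $p$, the tropical variety looks like an affine cone $p + \sigma$ over a tropical fan, and one can build a convex function $u$ that is tropically harmonic / maximal plurisubharmonic on $B\setminus V$ and whose "unbounded locus" or non-smooth locus is exactly $V$. The Fornaess--Sibony type result then guarantees that $B\setminus V$ is locally pseudoconvex / $(n-k)$-complete in the relevant sense, which by Andreotti--Frankel--type Morse theory forces the topology of $B\setminus V$ to have no homology above degree $n-k-1$ that is \emph{created} by removing $V$ — equivalently, the boundary spheres around punctures of $P\cap V$ are nontrivial in $B\setminus V$. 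More concretely, I would exhibit a proper exhaustion of $B\setminus V$ by sublevel sets of a function that is strictly $q$-convex for $q=n-k$, and invoke the vanishing $H_j(B\setminus V)=0$ for $j\ge n-k$ together with a linking argument: the small $(n-k-1)$-sphere around $p$ in $P$ links $V$ and hence is nonzero in $H_{n-k-1}(B\setminus V)$, and its image under $H_{n-k-1}(B\cap P\setminus V)\to H_{n-k-1}(B\setminus V)$ generates the needed nontrivial class.

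In order to make the transversality step legitimate one must check that $(n-k)$-convexity is insensitive to perturbing $P$ within affine $(n-k)$-planes — this is where a limiting argument is needed, since the conjecture quantifies over \emph{all} planes, including non-transverse ones; the standard device is that a homology class in a non-transverse slice is a limit of classes in nearby transverse slices, and the extension property is closed under such limits because everything happens in a fixed compact neighborhood. I would also need the local conicality of tropical varieties (the structure theorem that near any point $V$ coincides with a translate of the star, a rational polyhedral fan), to reduce to finitely many cone types and to build the plurisubharmonic exhaustion explicitly, e.g.\ as a max of linear functions dual to the cone's normal fan.

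The hard part, I expect, is the construction of the strictly $(n-k)$-convex exhaustion function on $B\setminus V$ directly from the combinatorics of the tropical fan, without assuming $V$ is realizable by an actual subvariety of $(\CC^*)^n$ — the Fornaess--Sibony/Rashkovskii machinery is phrased for amoebas of analytic sets, so either one argues that every tropical variety is locally realizable (true in low codimension, and one can often locally deform to a realizable fan without changing the relevant topology), or one replaces the analytic input with a purely convex-geometric substitute: the complement of a polyhedral fan of dimension $k$ in $\RR^n$ deformation-retracts onto a complex whose high-dimensional homology is controlled by the combinatorial codimension, which is precisely $(n-k)$. Managing this realizability/convexity dichotomy, and verifying that the linking class survives in the non-realizable case, is the step that requires the most care.
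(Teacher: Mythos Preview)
Your proposal misreads the target. In this paper, ``locally $(n-k)$-convex'' (Definition~\ref{def-local}) means precisely that the tropical variety admits no supporting $(n-k)$-cap in the sense of Definition~\ref{def-caps}; it is \emph{not} a small-ball version of the homological injectivity in Definition~\ref{def-global}. Your plan --- perturb $P$ to transversality, analyze punctured balls, and run a linking argument --- is aimed at a different and strictly harder statement than what is being asserted. None of the transversality or linking machinery is needed.

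More importantly, you are missing the one tool that makes the paper's proof a two-line argument: the tropical current $\mathscr{T}_{\mathcal{C}}$. For \emph{any} balanced weighted rational polyhedral complex $\mathcal{C}$ of dimension $k$ --- realizable or not --- one builds directly a closed positive $(k,k)$-current on $(\mathbb{C}^*)^n$ whose support is exactly $\Log^{-1}(\mathcal{C})$, by averaging integration currents over translates of the torus orbits attached to each cell. Fornaess--Sibony then gives that this support is $k$-pseudoconcave, and Rashkovskii's lemma (Lemma~\ref{Rash-caps} via Corollary~\ref{log-pseudo}) converts that into the absence of $(n-k)$-caps for $\mathcal{C}$. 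That is the whole proof.

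Because you do not have the tropical current, you are forced into the realizability discussion, which is a genuine gap: arbitrary tropical varieties are \emph{not} locally amoebas of algebraic subvarieties in general, so the branch ``realize $V$ as a non-Archimedean amoeba and quote Rashkovskii'' does not go through, and your fallback ``purely convex-geometric substitute'' is only a wish, not an argument. The fix is not to repair realizability but to replace it entirely by the tropical-current construction, which supplies the needed closed positive current with support $\Log^{-1}(V)$ for every tropical variety.
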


We then focus entirely on tropical varieties, establishing the following negative result (Theorem~\ref{thm-examples}):
\begin{mtheorem}\label{intro-B}
For any pair of integers $n\geq 4,$ and $2\leq k \leq n-2,$ there is a tropical variety of dimension~$k$ in $\RR^n$ whose complement is not globally $(n-k)$-convex.
\end{mtheorem}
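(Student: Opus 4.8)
The plan is to reduce the whole family to one example in the minimal case $(n,k)=(4,2)$ and then to propagate it to every admissible pair by taking products with affine spaces.

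For the base case I would take $\mathcal T_0\subset\RR^4$ to be the two‑dimensional effective balanced tropical fan of Babaee and Huh \cite{Babaee-Huh}, or a variant of it; the only features used are that $\mathcal T_0$ is a genuine two‑dimensional tropical cycle and is \emph{not} a matroid fan, so that the Lefschetz property for smooth tropical varieties established by Adiprasito--Bj\"orner does not apply. Since $\mathcal T_0$ is a cone, $\RR^4\setminus\mathcal T_0$ is homotopy equivalent to $S^3\setminus G_0$, where $G_0=\mathcal T_0\cap S^3$ is the finite graph whose vertices are the rays and whose edges are the two‑dimensional cones of $\mathcal T_0$; Alexander duality gives $H_1(\RR^4\setminus\mathcal T_0)\cong H^1(G_0)\cong\ZZ^{b_1(G_0)}$ and sends the class of a small loop linking a $2$-face $\sigma$ to the meridian $\mu_{e_\sigma}$ of the corresponding edge. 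The main step is then to exhibit an affine $2$-plane $L_0$ meeting $\mathcal T_0$ transversally in finitely many points $x_1,\dots,x_p$, each in the relative interior of a $2$-face $\sigma_i$, for which $\mu_{e_{\sigma_1}},\dots,\mu_{e_{\sigma_p}}$ are linearly dependent in $H_1(\RR^4\setminus\mathcal T_0)$; in the language of the link this says that the ``shadow disk'' of $L_0$ on $S^3$ crosses a set of edges of $G_0$ whose dual cohomology functionals are dependent --- for example a minimal edge cut --- which is a finite combinatorial check. Writing $\gamma_0=\sum_i a_i\ell_i$ for the resulting relation, with $\ell_i$ a small loop around $x_i$ in $L_0$: as $L_0\setminus\mathcal T_0=L_0\setminus\{x_1,\dots,x_p\}$ is a plane with $p$ punctures, $H_1(L_0\setminus\mathcal T_0)=\ZZ^p$ is free on the $\ell_i$ and $\gamma_0$ is nonzero there, whereas $\gamma_0$ dies in $H_1(\RR^4\setminus\mathcal T_0)$. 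Hence $\RR^4\setminus\mathcal T_0$ is not globally $2$-convex, even though by Theorem~\ref{intro-local-cov} it is locally $2$-convex, so the failure is genuinely global.

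For the general pair, given $n\ge4$ and $2\le k\le n-2$, put $m=n-k-2\ge0$ and set
\[\mathcal T=\mathcal T_0\times\RR^{k-2}\times\{0\}^m\ \subset\ \RR^4\times\RR^{k-2}\times\RR^m=\RR^n.\]
As the primitive normals to the codimension‑one faces of $\mathcal T_0\times\RR^{k-2}$ all lie in $\RR^4\times\RR^{k-2}$, the balancing of $\mathcal T_0$ makes $\mathcal T$ a $k$-dimensional tropical cycle in $\RR^n$. For the affine $(n-k)$-plane $L=L_0\times\{q\}\times\RR^m$ (any $q\in\RR^{k-2}$) one has $L\cap\mathcal T=(L_0\cap\mathcal T_0)\times\{q\}\times\{0\}$, again $p$ points, so $H_{n-k-1}(L\setminus\mathcal T)=\ZZ^p$ is free on the linking spheres $\Sigma_i$ and the $m$-fold suspension $\Gamma_0=\sum_i a_i\Sigma_i$ of $\gamma_0$ is nonzero. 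An iterated Mayer--Vietoris computation --- at each step writing the complement of $Z\times\{0\}$ in $V\times\RR$, with $V$ contractible, as $(V\setminus Z)\times\RR$ together with $V\times(\RR\setminus 0)$ --- produces a suspension isomorphism $H_{n-k-1}(\RR^n\setminus\mathcal T)\cong H_1(\RR^4\setminus\mathcal T_0)$ compatible with the slice inclusions, under which $\Gamma_0\mapsto\sum_i a_i\mu_{e_{\sigma_i}}=0$. Thus $\RR^n\setminus\mathcal T$ is not globally $(n-k)$-convex, which is Theorem~\ref{thm-examples}. Combined with the convexity results for complements of amoebas and their inheritance by tropical limits \cite{Henriques,Nisse-Sottile}, no member of this family is a non‑Archimedean amoeba of an algebraic variety, which is the input for the accompanying counter‑examples to the generalized Hodge conjecture for positive currents.

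The hard part is entirely the base case, namely finding $L_0$ whose crossed meridians are genuinely dependent. For a smooth tropical surface, in particular for any matroid fan, the meridians of any transversally crossed collection of faces are linearly independent, by the global Lefschetz property of Adiprasito--Bj\"orner; so the construction is forced to exploit the non‑realizable combinatorics of $\mathcal T_0$: one must locate a suitable edge cut, or a more general linear dependence, in the link graph $G_0$ that is actually swept out by an affine‑plane shadow, and verify that this phenomenon disappears under the matroidal degeneration of $\mathcal T_0$. This is where the specific example, and not soft topology, carries the argument.
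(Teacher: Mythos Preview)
The reduction step—manufacturing the $(n,k)$ example from a single base example in $\RR^4$ via $\mathcal T_0\times\RR^{k-2}\times\{0\}^m$ together with an iterated suspension isomorphism—is sound, and is a legitimate alternative to the paper's scheme. The paper instead builds a fresh $2$-dimensional fan $\widehat{\mathcal F}_{2,n}\subset\RR^n$ for every $n\ge 4$ and then only multiplies by $\RR^{k-2}$, so that the codimension (and hence the homological degree in the convexity test) never changes along the reduction. Your route trades a family of base constructions for one base construction plus some standard Alexander duality; either is acceptable.

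The genuine gap is the base case itself. You propose to take $\mathcal T_0$ to be the Babaee--Huh fan and to ``locate a suitable edge cut \dots\ that is actually swept out by an affine-plane shadow'', calling this ``a finite combinatorial check''—but you never perform it, and you yourself flag it as ``the hard part''. No reason is given that it succeeds: the Babaee--Huh fan is singled out by the failure of a Hodge--Riemann inequality, not by non-convexity of its complement, and the paper only argues the implication in the \emph{other} direction (its explicitly non-convex fans are shown, in Section~6, to violate Hodge--Riemann). Non-matroidality merely removes the Adiprasito--Bj\"orner obstruction; it does not produce a $2$-plane $L_0$ with dependent meridians. The paper sidesteps this search entirely: $\widehat{\mathcal F}_{2,n}$ is the cone over a generic perturbation of the Riemannian double cover $\widetilde X$ of a graph $X\subset S^{n-1}$ (the $1$-skeleton of a Cayley polytope), taken along a surjection $\pi_1(X)\twoheadrightarrow\ZZ$. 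The half-hyperplane $\widetilde L^+$ used to define the cover then separates $\widetilde X$ into its two sheets \emph{by construction}, so $\widetilde H^0(\widetilde X)\to\widetilde H^0(\widetilde X\setminus\widetilde L^+)$ has nontrivial cokernel, and Alexander duality plus the long exact sequence of the pair convert this into the required kernel in $\widetilde H_{n-3}$. In your language, the edge cut realized by a plane shadow is not searched for; it is the deck involution.

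A small correction to your final remark: the claim that these fans cannot be non-Archimedean amoebas does not follow from \cite{Henriques,Nisse-Sottile}. Those references give only \emph{positive} $(n-k)$-convexity for (limits of) amoebas, which by Proposition~\ref{local-global} is strictly weaker than global $(n-k)$-convexity; failure of the latter does not contradict the former.
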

This contrasts the earlier result of Nisse-Sottile in codimension one. In light of the results in by the first author and Huh-Katz in \cite{AHK} we will see that the above examples do not satisfy the Hodge-Riemann relations, and combined with the results of the second author with Huh in \cite{BH} they give rise to a family of counter-examples for the generalized Hodge conjecture for positive currents (See Section 6 for details). Namely:  

\begin{mtheorem}
For any pair of integers $n\geq 4,$ and $2\leq k \leq n-2,$ 
there is an $n$-dimensional smooth projective toric variety $X$ and a $(k,k)$-dimensional positive closed current $\mathscr{T}$ on $X$ with the following properties:
\begin{enumerate}[(1)]
\item The cohomology class of $\mathscr{T}$ satisfies
\[
\{\mathscr{T}\} \in H^{2(n-k)}(X,\mathbb{Z})\slash\textrm{tors}~ \cap H^{n-k,n-k}(X).
\]
\item The current $\mathscr{T}$ is not a weak limit of currents of the form
\[
\lim_{i \to \infty} \mathscr{T}_i, \quad \mathscr{T}_i=\sum_j \lambda_{ij} [Z_{ij}],
\]
where $\lambda_{ij}$ are nonnegative real numbers and $Z_{ij}$ are $k$-dimensional subvarieties  in $X$.
\end{enumerate}
\end{mtheorem}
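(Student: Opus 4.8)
The plan is to convert the combinatorial counterexample of Theorem~\ref{intro-B} into a current-theoretic statement by passing through the theory of tropical cycles and their associated currents. Concretely, a balanced weighted rational polyhedral complex $C$ of dimension $k$ in $\RR^n$ is the tropicalization (in the sense of a recession-fan / Bergman-type construction) of a Minkowski weight on a smooth projective toric variety $X$ whose fan refines the normal data of $C$; by the correspondence between Minkowski weights and cohomology classes of toric varieties (Fulton--Sturmfels), this produces a class $\alpha\in H^{2(n-k)}(X,\ZZ)/\mathrm{tors}$. The first step is therefore to choose the toric model $X$ so that $\alpha$ is of Hodge type $(n-k,n-k)$ — this is automatic for Minkowski weights since they all lie in the $(p,p)$-part — and so that the tropical cycle $C$ we built for Theorem~\ref{intro-B} is represented, via the second author's work with Huh in \cite{BH}, by a positive closed $(k,k)$-current $\TS$ on $X$ whose cohomology class is exactly $\alpha$. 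This establishes property~(1).

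The second step, property~(2), is the heart of the matter and is where I expect the genuine obstacle to lie. The idea is that if $\TS$ were a weak limit $\lim_i \TS_i$ with $\TS_i=\sum_j\lambda_{ij}[Z_{ij}]$, $\lambda_{ij}\geq 0$ and $Z_{ij}$ subvarieties of dimension $k$, then — after tropicalizing and taking a further limit of amoebas, using the Nisse--Sottile limit argument recalled in the introduction — the support of the limiting tropical object would be a tropical variety whose complement satisfies the positive higher-convexity inherited from the Henriques/Bushueva--Tsikh convexity results that, as noted in the excerpt, pass to non-Archimedean amoebas. But the tropical cycle $C$ underlying $\TS$ was constructed precisely so that its complement fails $(n-k)$-convexity, contradicting that inherited convexity. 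Making this rigorous requires relating weak convergence of positive closed currents on $X$ to a form of Hausdorff/geometric convergence of the associated tropical data; this is exactly the mechanism developed in \cite{BH} via the ``tropical currents'' formalism, so the key step is to verify that the obstruction is stable under the relevant limits, i.e.\ that a weak limit of the $\TS_i$ would have to have the same recession/tropical support as $\TS$, or at least one still violating convexity.

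An alternative, and I think cleaner, route to property~(2) goes through the Hodge--Riemann relations rather than through convexity directly. By \cite{AHK}, the Chow ring of the Bergman fan of a matroid — and more generally suitable Minkowski-weight combinatorics — satisfies the Hodge--Riemann bilinear relations; conversely, the paper has just told us (and this is the content tying Theorem~\ref{intro-B} to \cite{AHK}) that the combinatorial object built for the non-convex example \emph{fails} these relations in the appropriate degree. On the other hand, by the main approximation theorem of \cite{BH}, if the positive closed current $\TS$ representing such a class were approximable by positive combinations of subvarieties, then its class would have to lie in the cone generated by effective cycle classes, and the strictness of the Hodge--Riemann signature forces any such class to satisfy a positivity inequality that $\alpha$ violates. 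So the plan is: (i) build $X$ and $\TS$ from the Theorem~\ref{intro-B} example; (ii) quote \cite{BH} to get property~(1) and to reduce property~(2) to a statement about the cohomology class; (iii) quote \cite{AHK} and the failure of Hodge--Riemann for the example to rule out approximability.

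The main obstacle, in either route, is step~(ii)/(iii): precisely matching the hypotheses of the \cite{BH} approximation criterion to our current $\TS$ — in particular checking that $\TS$ is the ``tropical current'' attached to $C$ in their exact normalization, that its class is primitive integral as claimed, and that non-approximability in their sense is equivalent to the non-approximability asserted in~(2) for arbitrary (not necessarily toric, not necessarily fan-like) subvarieties $Z_{ij}$. Everything else — the toric construction, the Hodge-type computation, and the bookkeeping of dimensions for all admissible $(n,k)$ with $n\geq 4$ and $2\leq k\leq n-2$ — should be routine given the cited results and the explicit nature of the Theorem~\ref{intro-B} family.
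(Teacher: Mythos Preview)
Your second route is in the right neighborhood, but it is missing the ingredient that actually makes the argument go through in the paper: \emph{extremality}. The paper's proof does not deduce non-approximability from a cohomological positivity inequality on the class $\alpha$ alone. Instead it first arranges (by a small perturbation, Step~6) that the fan $\widehat{\mathcal{F}}_{k,n}$ is \emph{strongly extremal}, so that the associated tropical current $\mathscr{T}_{\widehat{\mathcal{F}}_{k,n}}$ generates an extremal ray in the cone of closed positive currents (\cite[Theorem~2.12]{BH}). Milman's converse to Krein--Milman then reduces approximability by $\sum_j\lambda_{ij}[Z_{ij}]$ to approximability by $\lambda_i[Z_i]$ with $Z_i$ \emph{irreducible}. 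Only at that point does the Hodge index theorem enter: applied on each $Z_i$, it forces the matrix $[\{H_1\}\cup\cdots\cup\{H_{k-2}\}\cup\{\overline{\mathscr{T}}\}\cdot D_i\cdot D_j]$ to have at most one positive eigenvalue. The contradiction then comes from an explicit computation (Step~7) exploiting the double-cover structure of $\widetilde{X}$: the two pullbacks $\varphi,\varphi'$ of an ample class from $\mathcal{F}_X$ give divisors $\omega,\omega'$ with $\deg(\omega^2)=\deg(\omega'^2)>0$ and $\deg(\omega\omega')=0$, producing \emph{two} positive eigenvalues. None of this is a citation to \cite{AHK}; the failure of Hodge--Riemann is established by hand from the construction, not imported from matroid theory.

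Your first route, via convexity of tropicalizations of the approximants $Z_{ij}$, has a genuine gap and is not the mechanism the paper uses. Weak convergence of currents on $X$ gives no control on Hausdorff convergence of supports, amoebas, or tropicalizations; there is no reason the tropicalizations of the $Z_{ij}$ should converge to $\widehat{\mathcal{F}}_{k,n}$, nor that the limit would inherit the (positive) $(n-k)$-convexity of the individual tropicalizations in a way that contradicts Theorem~\ref{intro-B}. The paper in fact separates the two phenomena: the non-convexity of Theorem~\ref{intro-B} and the non-approximability of Theorem~C are linked only insofar as the same fan witnesses both, the latter via the eigenvalue/Hodge-index argument above rather than through convexity.
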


As the higher convexity properties of open sets passes over the Hausdorff limit\footnote{Throughout this article, by Hausdorff limit, we mean Hausdorff limit with respect to compact subsets of $\RR^n$.} and the global and local notions of convexity are equivalent for domains with smooth boundary, non-$(n-k)$-convex examples in Theorem B gives rise to obstruction for exhaustion of \emph{$k$-pseudoconvex} domain (Definition~\ref{def-pseudocon}) with $k$-pseudoconvex domains which have smooth boundaries (Compare to \cite{Diederich-Fornaess}). This observation has the following consequence (Theorem~\ref{thm-non-moli}):

\begin{mtheorem}
For any pair of integers $n\geq 4,$ and $2\leq k \leq n-2,$ there is a closed positive current $\mathscr{T}\in \mathscr{D}'_{k,k}((\CC^*)^n),$ for which there is no family of mollifiers $\mu_t$ with 
\begin{itemize}
\item[(i)] $\text{Supp} (\mu_t \star \mathscr{T})$ converges to $\text{Supp} (\mathscr{T})$ in Hausdorff sense as $t\to 0$, and
\item [(ii)] $\text{Supp} (\mu_t\star \mathscr{T})$ have smooth boundaries. 
\end{itemize}
\end{mtheorem}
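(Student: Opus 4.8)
The plan is to derive this statement as a formal consequence of Theorem B together with the transfer principles already announced in the excerpt, rather than to construct anything new. Fix $n \geq 4$ and $2 \leq k \leq n-2$, and let $V \subset \RR^n$ be the $k$-dimensional tropical variety produced by Theorem B, so that $U := \RR^n \setminus V$ is \emph{not} globally $(n-k)$-convex. The tropical variety $V$ is the non-Archimedean amoeba of (equivalently, carries the weights of) an effective tropical cycle; via the correspondence between tropical cycles and closed positive currents on $(\CC^*)^n$ used by the second author and Huh in \cite{BH}, we obtain a closed positive current $\mathscr{T} \in \mathscr{D}'_{k,k}((\CC^*)^n)$ whose support, under the coordinatewise $\Log$ map, is exactly $V$ (or has $V$ as its recession/tropicalization, which is all that is needed here). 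This is the current we claim has no mollifying family with the two listed properties.

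Next I would argue by contradiction. Suppose $\mu_t$ is a family of mollifiers such that $\mathscr{T}_t := \mu_t \star \mathscr{T}$ has $\mathrm{Supp}(\mathscr{T}_t) \to \mathrm{Supp}(\mathscr{T})$ in the Hausdorff sense as $t \to 0$, and such that each $\mathrm{Supp}(\mathscr{T}_t)$ has smooth boundary. Pushing forward under $\Log$, the sets $\Omega_t := \RR^n \setminus \Log(\mathrm{Supp}(\mathscr{T}_t))$ are open domains with smooth boundary that exhaust $U = \RR^n \setminus V$ as $t \to 0$ (Hausdorff convergence of the closed support sets on compacts gives convergence of the open complements on compacts). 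Because convolution by a mollifier is a positive, support-controllable smoothing operation, each $\Omega_t$ inherits the $k$-pseudoconvexity (Definition~\ref{def-pseudocon}) appropriate to the complement of a $k$-dimensional positive current — this is the analytic counterpart of the statement, referenced in the excerpt, that $k$-pseudoconvexity is the complex-analytic shadow of $(n-k)$-convexity. For domains with smooth boundary, the excerpt tells us the local and global notions of convexity coincide; hence each smooth $\Omega_t$ is globally $(n-k)$-convex.

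Now I would invoke the stability of higher convexity under Hausdorff limits, stated in the excerpt (with the footnote clarifying that limits are taken with respect to compact subsets of $\RR^n$): since $U$ is the Hausdorff limit of the globally $(n-k)$-convex domains $\Omega_t$, the set $U$ is itself globally $(n-k)$-convex. This contradicts the defining property of $V$ coming from Theorem B, namely that $\RR^n \setminus V$ is \emph{not} globally $(n-k)$-convex. Therefore no such mollifying family $\mu_t$ exists, which is exactly the assertion.

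The main obstacle — and the step that requires genuine care rather than bookkeeping — is establishing that each smoothed support $\mathrm{Supp}(\mu_t \star \mathscr{T})$, equivalently each $\Omega_t$, really is $k$-pseudoconvex, i.e.\ that convolution by a mollifier does not destroy the relevant convexity/pseudoconvexity of the complement of the support. One must check that $\mathrm{Supp}(\mu_t \star \mathscr{T})$ is contained in a controlled neighborhood of $\mathrm{Supp}(\mathscr{T})$ and, more subtly, that the complement cannot develop new non-extendable cycles in plane sections; the cleanest route is to note that $\mu_t \star \mathscr{T}$ is again a closed positive current whose support is a union over the mollifier's (compact, shrinking) support of translates of $\mathrm{Supp}(\mathscr{T})$, invoke local $k$-pseudoconvexity of complements of supports of closed positive currents (the analytic analogue of Theorem A / Theorem~\ref{no-caps-cycles}), and then upgrade local to global using the smooth-boundary hypothesis. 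A secondary point to make precise is the passage between ``support of $\mathscr{T}$'' and ``tropical variety $V$'' under $\Log$, and the compatibility of Hausdorff convergence with the $\Log$ map on compact sets; both are routine but should be stated. Once these are in place, the contradiction with Theorem B closes the argument.
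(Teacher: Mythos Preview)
Your proposal is correct and follows essentially the same route as the paper: take $\mathscr{T}=\mathscr{T}_{\widehat{\mathcal{F}}_{k,n}}$, note that $\mu_t\star\mathscr{T}$ is again closed and positive so its support is $k$-pseudoconcave by Forn{\ae}ss--Sibony, transfer to $\RR^n$ to obtain local $(n-k)$-convexity of the complement, upgrade to global $(n-k)$-convexity via the smooth-boundary assumption (Proposition~\ref{local-global}), and then use Hausdorff stability (Lemma~\ref{k-convergence}) to contradict Theorem~B. The one place the paper is slightly more direct is the $\Log$ passage you flag as ``routine'': rather than projecting $\mathrm{Supp}(\mu_t\star\mathscr{T})$ to $\RR^n$ and claiming the image has smooth boundary (which is not automatic for a general submersion), the paper uses that this support is already of the form $\Log^{-1}(A_t)$, so smoothness of $\partial A_t$ and the no-cap conclusion for $A_t$ come for free.
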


\subsection*{Acknowledgments}
{The authors thank the referees for the fruitful comments.} We thank the CRM Ennio de Giorgi for its hospitality that enabled much of this work. Additionally, we are grateful to Emanuele Delucchi, Omid Amini, Romain Dujardin, Charles Favre, and Pierre Schapira for the encouraging discussions, and their support, and we are thankful to Nessim Sibony for the communications and references. 
\section{Notions of convexity in $\RR^n$}

There are several notions of convexity in $\RR^n.$ A global notion of convexity was considered in \cite{Gromov}, and \cite{Henriques}: 

\begin{definition}\label{def-global}
A subset $X\subseteq\RR^n$ is called \emph{globally $k$-convex} if the homomorphisms of reduced homology groups
$$
i_L:\widetilde{H}_{k-1}(L \cap X)\rightarrow \widetilde{H}_{k-1}(X), 
$$
is injective, for all affine $k$-planes $L\subseteq \RR^n.$ 
\end{definition}

By Morse theory one can see that if $X\subseteq \RR^n$ is an open domain with smooth boundary $\partial X,$ then the $k$-convexity of $X,$ is equivalent to $\partial X$ having at most $(k-1)$-negative principal curvatures with respect to the inward normal which in turn implies non-existence of singularity of Morse index $k$ (See \cite[Section \nicefrac{1}{2}]{Gromov}).  Existence of $k$-negative principal directions, however, simply implies existence of a \textit{supporting $k$-cap} for $A := \RR^n \setminus X$, which can be defined for non-smooth subsets as well.  

\begin{definition}[\cite{Mikh-amoeba}]\label{def-caps}
An open round disk $B\subseteq L$ of radius $\delta>0$, in an affine $k$-plane $L\subseteq \RR^n,$ is called a supporting $k$-cap ($k$-cap for short) for $A\subseteq \RR^n$, if 
\begin{itemize}
\item $B\cap A$ is nonempty and compact;
\item There exists a vector $v \in \RR^n$ such that the translation of $B$, by $\epsilon v$ is disjoint from $A$ for sufficiently small $\epsilon >0.$
\end{itemize}
\end{definition}

\begin{definition}\label{def-local}
A subset $X\subseteq\RR^n$ is called \emph{locally $k$-convex} if the complement $A=\RR^n \setminus X,$ has no supporting $k$-caps. 

\end{definition}

Unlike the smooth case, we will see in Section~\ref{sec-counterexample} that these two notions are not equivalent in general. An intermediate notion of higher convexity was considered by Henriques:
\begin{definition}[\cite{Henriques}]\label{pos-convex}
A subset $X\subseteq\RR^n$ is called positively $k$-convex, if for any $k$-plane $L,$ the if the kernel of the homomorphisms of reduced homology groups
$$
i_L:\widetilde{H}_{k-1}(L \cap X)\rightarrow \widetilde{H}_{k-1}(X), 
$$
does not contain any \textit{positive} $k$-chain. Here, a positive chain is a positive sum of $k$-cycles which have the positive orientation induced by a fixed orientation from $L$.  
\end{definition}

The following proposition partially compares the {aforementioned} notions of convexity and hints that the supporting caps fails to be a good generalization of notion of Morse index for stratified spaces. 
 
\begin{proposition}\label{local-global}
Let  $X \subseteq \RR^n,$ 
\begin{enumerate}
\item  $X$ is $k$-convex  $\implies$ $X$ is positively $k$-convex $\implies$ $X$ is locally $k$-convex. 
\item  If $X$ has a smooth boundary then, for $X$ the three notions are equivalent. 
\item In general, locally $k$-convexity does not imply global the $k$-convexity.
\end{enumerate} 
\end{proposition}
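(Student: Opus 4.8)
The plan is to treat the three parts separately: (1) by elementary algebraic topology, (2) by the Morse-theoretic description of $k$-convexity for domains with smooth boundary, and (3) by appealing to the two main theorems.

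For part (1), the first implication is immediate from the definitions: $k$-convexity means $\ker(i_L)=0$ for every affine $k$-plane $L$, so in particular $\ker(i_L)$ contains no nonzero positive chain. For the second implication I would argue by contraposition, starting from a supporting $k$-cap $B\subseteq L$ for $A=\RR^n\setminus X$ with translation vector $v$ as in Definition~\ref{def-caps}. Since $K:=B\cap A$ is a nonempty compact subset of the open disk $B$, shrinking $B$ to a slightly smaller concentric disk lets me assume in addition that $\overline B\cap A=K$, so that $\partial B\subseteq X$ and $\mathrm{dist}(\partial B,A)>0$. Then the $(k-1)$-sphere $S:=\partial B$ can be pushed off in the direction $v$: for all sufficiently small $t>0$ one has $\overline B+tv\subseteq X$, and $\partial B+tv\subseteq X$ for all small $t\geq 0$; hence $S$ is homologous in $X$, through the cylinder $\{\partial B+tv:0\le t\le\epsilon\}$, to $\partial(\overline B+\epsilon v)$, which bounds the $k$-disk $\overline B+\epsilon v\subseteq X$, so $i_L([S])=0$ in $\widetilde H_{k-1}(X)$. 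On the other hand, radial projection from any point $q\in K$ realizes $S$ as a degree $\pm1$ map onto $S^{k-1}$, so $[S]\neq0$ in $\widetilde H_{k-1}(L\setminus\{q\})$; since $q\in K\subseteq A$, the inclusion $L\cap X\hookrightarrow L\setminus\{q\}$ carries $[S]$ to $[S]$, whence $[S]\neq 0$ in $\widetilde H_{k-1}(L\cap X)$. Orienting $\overline B$ by the fixed orientation of $L$ makes $[S]=[\partial\overline B]$ a nonzero positive chain lying in $\ker(i_L)$, so $X$ is not positively $k$-convex.

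For part (2), the only nontrivial direction, given (1), is that a domain $X$ with smooth boundary that is not $k$-convex is not locally $k$-convex. Here I would invoke the Morse-theoretic criterion recalled above (\cite[Section~\nicefrac{1}{2}]{Gromov}): if $X$ is not $k$-convex there is a point $p\in\partial X$ at which the second fundamental form of $\partial X$ with respect to the inward normal $\nu$ has at least $k$ negative eigenvalues. Picking a $k$-dimensional subspace $W\subseteq T_p\partial X$ on which it is negative definite, choosing coordinates centered at $p$ with $T_p\partial X=\{y=0\}$, $\nu=\partial_y$, and writing $\partial X$ locally as a graph $\{y=h(x)\}$ with $X=\{y>h(x)\}$, the restriction $h|_W$ has a strict local maximum $0$ at the origin. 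Then a sufficiently small disk $B\subseteq p+W$ about $p$ satisfies $B\cap A=\{p\}$ (nonempty and compact), while $B+\epsilon\nu$ lies in $\{y=\epsilon\}$ and is therefore disjoint from $A=\{y\leq h(x)\}$ for all small $\epsilon>0$; hence $B$ is a supporting $k$-cap and $X$ is not locally $k$-convex. (If a full-dimensional intersection with $A$ is preferred, one first translates $p+W$ by $-\epsilon\nu$.) For part (3), I would simply combine the main results: by Theorem~\ref{intro-local-cov} the complement of any $k$-dimensional tropical variety in $\RR^n$ is locally $(n-k)$-convex, whereas Theorem~\ref{thm-examples}, proved in Section~\ref{sec-counterexample}, produces for every $n\geq 4$ and $2\leq k\leq n-2$ a $k$-dimensional tropical variety whose complement is not globally $(n-k)$-convex — that complement is the desired example.

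I expect the fussiest point to be the cap/curvature correspondence in (2): pinning down the sign convention relating negative principal curvatures (with respect to the inward normal) to the Hessian of the local defining graph, and verifying that the chosen small disk meets $A$ only near $p$. The homological bookkeeping in (1) also requires a little care, precisely because $A$ is only closed and not compact, which is why one must first separate $\partial B$ from $A$ by a positive distance (after possibly shrinking the cap) before filling in the translated disk.
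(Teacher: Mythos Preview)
Your proposal is correct and follows essentially the same approach as the paper. The paper's own proof is much terser---it dismisses part (1) as following ``easily from the definition'', handles part (2) by pointing back to the Morse-theoretic discussion preceding Definition~\ref{def-caps} and citing \cite[Section~\nicefrac{1}{2}]{Gromov}, and establishes part (3) by forward reference to Theorem~\ref{no-caps-cycles} and the construction of $\widehat{\mathcal{F}}_{k,n}$ in Section~\ref{sec-counterexample}---but your detailed arguments (the cap-to-positive-cycle contraposition for (1), the negative-curvature subspace yielding a cap for (2), and the combination of Theorems~\ref{intro-local-cov} and~\ref{thm-examples} for (3)) are exactly the natural fleshing-out of these sketches.
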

\begin{proof}
The first implication follows easily from the definition. The second is an application of Morse theory which we discussed before Definition~\ref{def-caps} (See also \cite[Section 1/2]{Gromov}). For the last statement we
will prove in Theorem~\ref{no-caps-cycles} that every $k$-dimensional tropical cycle in $\RR^n$ lacks $(n-k)$-caps, but in Section~\ref{sec-counterexample} for any pair of integers $n\geq 4,$ and $2\leq k \leq n-2,$ 
we will construct a $k$-dimensional tropical variety $\widehat{F}_{k,n}\subseteq \RR^n$ whose complement is not globally $(n-k)$-convex.
\end{proof}

\section{Pseudoconvexity in $\CC^n$}
We recall Rothstein's notion of pseudoconvexity in $\CC^n$, which is invariant under biholomorphic mappings. The notion is defined using the \emph{Hartogs figures}: Given $0<q<n$ and $\alpha,\beta \in (0,1),$ the set
$$
H= \{(z,w)\in \CC^{n-q} \times \CC^q: ||z||_{\infty}<1, ||w||_{\infty}<\alpha \text{ or }  \beta<||z||_{\infty}<1, ||w||_{\infty}<1\}
$$
is called an $(n-q,q)$-Hartogs figure, where $||z||_{\infty}=\max_j|z_j|.$ Note that the convex hull $\widehat{H}$ is simply a polydisc. 

\begin{definition}[Rothstein's $q$-pseudoconvexity]\label{def-pseudocon}
An open subset $\Omega$ of a complex $n$-dimensional manifold $M$ is said to be $q$-pseudoconvex in $M$ if for any $(n-q,q)$-Hartogs figure $H$, and a biholomorphic map $\Phi:H\rightarrow M,$ the condition $\Phi(H)\subseteq \Omega$ implies $\Phi(\widehat{H})\subseteq \Omega.$  In this case, $M\setminus \Omega$ is called $q$-pseudoconcave in $M.$
\end{definition}

Pseudoconvexity in $\CC^n$ is related by the following lemma to the local convexity in $\RR^n.$

\begin{lemma}[{\cite[Proposition 2.2]{Rash-amoeba}}]\label{Rash-caps}
Let $\Gamma$ be a closed subset of a convex open set $D\subseteq \RR^n.$ If the tube set $\RR^n+ i\Gamma$ is $k$-pseudoconcave in the tube domain $\RR^n+iD,$ then $\Gamma$ has no supporting $(n-k)$-caps. 
\end{lemma}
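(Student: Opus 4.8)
The plan is to deduce Lemma~\ref{Rash-caps} from the relation between Hartogs figures in $\CC^n$ and round disks in $\RR^n$, realized through the tube construction. First I would set up the geometric dictionary: given a supporting $(n-k)$-cap $B\subseteq L\cap A$ for $\Gamma$ (with $A:=\RR^n\setminus X$ replaced here by $\Gamma$, but in the lemma statement $\Gamma$ itself plays the role of the ``obstacle''), I would choose affine coordinates on $\RR^n$ so that the affine $(n-k)$-plane $L$ becomes a coordinate subspace and the center of $B$ is at the origin; after a further real-linear change of coordinates (which extends to a $\CC$-linear automorphism of $\CC^n$, preserving $q$-pseudoconcavity since that notion is biholomorphically invariant) I may assume $B$ is a round disk in $\RR^{n-k}\times\{0\}\subseteq\RR^{n}$ and that the ``escape vector'' $v$ points in one of the last $k$ coordinate directions. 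The key point is then that, up to rescaling, the complexification $\RR^n+iB^{\mathrm{small}}$ together with the nearby translated disks sitting off $\Gamma$ traces out exactly (the biholomorphic image of) the non-convex part of an $(k,n-k)$-Hartogs figure $H$, while the ``filled'' disk $B$ itself — which meets $\Gamma$ — corresponds to a point of $\widehat H\setminus H$.

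More precisely, the second key step is to produce the biholomorphic (indeed, affine-linear plus a rescaling) map $\Phi\colon H\to\RR^n+iD$ such that $\Phi(H)\subseteq \RR^n+i\Gamma$ but $\Phi(\widehat H)\not\subseteq\RR^n+i\Gamma$, thereby contradicting $k$-pseudoconcavity. Here one uses the hypothesis that $B\cap\Gamma$ is nonempty and compact: compactness lets one shrink the imaginary-part region to a small polydisc contained in the convex open set $D$ so that the tube $\RR^n+iD$ is the ambient space, and nonemptiness of $B\cap\Gamma$ is exactly what prevents $\Phi(\widehat H)$ from landing inside $\RR^n+i\Gamma$. The disjointness of the translated disk from $\Gamma$ (the supporting-cap condition with vector $v$) is what guarantees that the two pieces of $\Phi(H)$ — the ``thin'' part $\{\,||z||_\infty<1,||w||_\infty<\alpha\,\}$ and the ``wide collar'' $\{\,\beta<||z||_\infty<1,||w||_\infty<1\,\}$ — both avoid $\Gamma$ in imaginary coordinates, with $w$ recording the translation parameter in the $v$-direction.

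I would then observe that this is precisely the contrapositive of the assertion: if $\RR^n+i\Gamma$ is $k$-pseudoconcave in $\RR^n+iD$, then no such $\Phi$ exists, hence no supporting $(n-k)$-cap for $\Gamma$ exists. A small amount of care is needed about the role of real directions: a Hartogs figure in $\CC^n$ has $2n$ real dimensions while a cap lives in an $n$-dimensional real slice, so the ``trivial'' real directions $\mathrm{Re}(z),\mathrm{Re}(w)$ of the Hartogs figure are simply absorbed into the $\RR^n$ factor of the tube, and only the imaginary coordinates $\mathrm{Im}(z)\in\RR^{n-k}$, $\mathrm{Im}(w)\in\RR^{k}$ carry the cap geometry — with $\mathrm{Im}(z)$ sweeping out the round disk and $\mathrm{Im}(w)$ the escape direction. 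The main obstacle, and the step I would spend the most effort making precise, is the explicit matching of radii $\delta$ (the cap radius), the escape length $\epsilon$, and the Hartogs parameters $\alpha,\beta$: one must check that for $\epsilon$ small enough relative to $\delta$ there really is an admissible choice of $\alpha,\beta\in(0,1)$ and a rescaling factor making $\Phi(H)$ fit inside the tube over $D$ while $\Phi(\widehat H)$ protrudes exactly over the point of $B\cap\Gamma$; this is elementary but fiddly, and it is where the convexity and openness of $D$ get genuinely used to leave room for the construction. Since the lemma is quoted from \cite{Rash-amoeba}, I would in practice cite it, but the above is the argument I would reconstruct.
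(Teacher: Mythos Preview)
Your approach is the same as the paper's: the paper gives a one-line contrapositive argument, saying that an $(n-k)$-cap for $\Gamma$ yields a Hartogs figure $H$ with $\Phi(H)\subseteq \RR^n+i(D\setminus\Gamma)$ but $\widehat H$ meeting $\RR^n+i\Gamma$, and you are (correctly) unpacking how that Hartogs figure is built from the cap data.

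Two slips to fix. First, in your second paragraph you write ``$\Phi(H)\subseteq \RR^n+i\Gamma$'' and ``prevents $\Phi(\widehat H)$ from landing inside $\RR^n+i\Gamma$''; both should have $D\setminus\Gamma$ in place of $\Gamma$ (your later sentence ``both avoid $\Gamma$ in imaginary coordinates'' shows you have the right picture). Second, the two pieces of $H$ avoid $\Gamma$ for \emph{different} reasons: the thin part $\{\|w\|_\infty<\alpha\}$ should be centered at the translated disk $B+\epsilon v$ and avoids $\Gamma$ by the escape condition, while the collar $\{\beta<\|z\|_\infty<1\}$ avoids $\Gamma$ because $B\cap\Gamma$ is \emph{compact} in the open disk $B$, hence bounded away from $\partial B$. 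You currently attribute both to the escape direction and use compactness only to fit inside $D$; in the actual matching of parameters, compactness of $B\cap\Gamma$ is exactly what lets you choose $\beta$ close to $1$ so the collar misses $\Gamma$.
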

\begin{proof}
Since by definition supporting caps are open, a $(n-k)$-cap $B$ for $\Gamma$ gives rise to a $(k,n-k)$ Hartogs figure $H \subseteq\RR^n +i( D
\setminus \Gamma),$ such that $\widehat{H}\cap \Gamma \not=\emptyset,$ hence $\RR^n +i( D\setminus \Gamma)$ cannot be $k$-pseudoconvex.
\end{proof}

\begin{corollary}\label{log-pseudo}
Let  $\Gamma \subseteq D$ be as in the preceding proposition. If the set $\Log^{-1}(\Gamma)$ is $k$-pseudoconcave in the domain $\Log^{-1}(D),$ then $\Gamma$ has no supporting $(n-k)$-caps.  
\end{corollary}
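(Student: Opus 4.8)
The plan is to reduce the statement to Lemma~\ref{Rash-caps} by transporting the pseudoconcavity hypothesis from the logarithmic domain $\Log^{-1}(D)$ to the tube domain $\RR^n+iD$ along the exponential map. Consider the holomorphic map $\pi\colon\CC^n\to(\CC^*)^n$, $\zeta=(\zeta_1,\dots,\zeta_n)\mapsto(e^{-i\zeta_1},\dots,e^{-i\zeta_n})$, for which $\Log\circ\pi=(\operatorname{Im}\zeta_1,\dots,\operatorname{Im}\zeta_n)$. Then $\pi$ restricts to a holomorphic covering map $\pi\colon\RR^n+iD\to\Log^{-1}(D)$ whose deck group is the group of real translations by $2\pi\ZZ^n$, and it satisfies $\pi^{-1}(\Log^{-1}(\Gamma))=\RR^n+i\Gamma$ and $\pi^{-1}(\Log^{-1}(D\setminus\Gamma))=\RR^n+i(D\setminus\Gamma)$. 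In particular $\pi$ is a local biholomorphism which, locally, identifies the pair $(\Log^{-1}(\Gamma)\subseteq\Log^{-1}(D))$ with the pair $(\RR^n+i\Gamma\subseteq\RR^n+iD)$ of Lemma~\ref{Rash-caps}. It therefore suffices to prove that $\Log^{-1}(\Gamma)$ is $k$-pseudoconcave in $\Log^{-1}(D)$ if and only if $\RR^n+i\Gamma$ is $k$-pseudoconcave in $\RR^n+iD$; the lemma then gives the conclusion.

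One implication is elementary. Given a $(k,n-k)$-Hartogs figure $\Phi\colon\widehat H\to\Log^{-1}(D)$ which is biholomorphic onto its image and has $\Phi(H)\subseteq\Log^{-1}(D\setminus\Gamma)$, the hull $\widehat H$ is a polydisc, hence simply connected, so $\Phi$ lifts through $\pi$ to a map $\widetilde\Phi\colon\widehat H\to\RR^n+iD$ with $\pi\circ\widetilde\Phi=\Phi$; since $\pi$ is a local biholomorphism and $\Phi$ is injective, $\widetilde\Phi$ is again biholomorphic onto its image, and $\widetilde\Phi(H)\subseteq\pi^{-1}(\Log^{-1}(D\setminus\Gamma))=\RR^n+i(D\setminus\Gamma)$. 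Thus $k$-pseudoconvexity of $\RR^n+i(D\setminus\Gamma)$ in $\RR^n+iD$ descends to $k$-pseudoconvexity of $\Log^{-1}(D\setminus\Gamma)$ in $\Log^{-1}(D)$. For the remaining implication I would argue contrapositively, reconstructing directly inside $(\CC^*)^n$ the cap-to-Hartogs-figure passage behind Lemma~\ref{Rash-caps}: a supporting $(n-k)$-cap for $\Gamma$ produces a $(k,n-k)$-Hartogs figure in $\RR^n+iD$ obtained by thickening the cap by an \emph{arbitrarily small} amount in the directions of the real part --- those in which $\pi$ is periodic --- so that $\pi$ is injective on its hull; pushing this figure down by $\pi$ then gives a genuine $(k,n-k)$-Hartogs figure in $\Log^{-1}(D)$ whose skeleton lies in $\Log^{-1}(D\setminus\Gamma)$ while its hull meets $\Log^{-1}(\Gamma)$, contradicting $k$-pseudoconcavity of $\Log^{-1}(\Gamma)$. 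Alternatively, one can simply invoke that Rothstein $q$-pseudoconvexity is a local property of the boundary and is hence preserved under the local biholomorphism $\pi$.

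Combining the two implications yields the equivalence, and hence, by Lemma~\ref{Rash-caps}, that $\Gamma$ has no supporting $(n-k)$-caps. The step I expect to require the most care is this last implication: one must know that a Hartogs figure witnessing a failure of $k$-pseudoconvexity can be taken small enough in the periodic directions to descend faithfully along $\pi$. This is precisely a manifestation of the locality of $q$-pseudoconvexity, and I would settle it either by citing the relevant results from the theory of $q$-convexity or by reading off the needed smallness from the explicit construction underlying Lemma~\ref{Rash-caps}.
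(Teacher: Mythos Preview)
Your approach is essentially the paper's: both reduce to Lemma~\ref{Rash-caps} by noting that the coordinatewise complex exponential (equivalently, the paper's $(-i\log,\dots,-i\log)$) is a local biholomorphism between the tube domain $\RR^n+iD$ and $\Log^{-1}(D)$, and then invoking that Rothstein $q$-pseudoconvexity is invariant under local biholomorphisms. One remark on emphasis: for the corollary you only need the implication ``$\Log^{-1}(\Gamma)$ $k$-pseudoconcave $\Rightarrow$ $\RR^n+i\Gamma$ $k$-pseudoconcave'', which is exactly your \emph{remaining} implication---the one you settle via locality (just as the paper does); your careful Hartogs-lifting argument establishes the converse direction, which is not actually needed here.
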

\begin{proof}
The map $(-i\log, \dots, -i\log): \CC^n \rightarrow \CC^n,$ is a biholomorphism in the unit complex disk, and by invariance of the notion of pseudoconvexity under bihomolomorphisms, we see that $\Log^{-1}(\Gamma)$ is $k$-pseudoconcave in $\Log^{-1}(D)$ if and only if $\RR^n + i \Gamma$ is $k$-pseudoconcave in $\RR^n+ iD.$ The result therefore follows from Lemma~\ref{Rash-caps}.
\end{proof}

\section{Pseudoconcavity of support of currents}

Let $X$ be a complex manifold of dimension $n$. If $k$ is a nonnegative integer, we denote by $\mathscr{D}^{k}(X)$ the space of smooth complex differential forms of degree $k$ with compact support, endowed with the inductive limit topology. The space of currents of dimension $k$ is the topological dual space $\mathscr{D}'_{k}(X)$, that is, the space of all continuous linear functionals on $\mathscr{D}^{k}(X)$:
\[
\mathscr{D}'_{k}(X):=\mathscr{D}^{k}(X)'.
\]
The pairing between a current $\mathscr{T}$ and a differential form $\varphi$ will be denoted $\langle \mathscr{T},\varphi \rangle$. A $k$-dimensional current $\mathscr{T}$ is a \emph{weak limit} of a sequence of $k$-dimensional currents $\mathscr{T}_i$  if 
\[
\lim_{i \to \infty} \langle \mathscr{T}_i, \varphi \rangle = \langle \mathscr{T}, \varphi \rangle \ \ \text{for all $\varphi \in \mathscr{D}^{k}(X)$}. 
\]
 
The exterior derivative of a $k$-dimensional current $\mathscr{T}$ is the $(k-1)$-dimensional current  $d\mathscr{T}$ defined by
\[
\langle d\mathscr{T},\varphi \rangle = (-1)^{k+1} \langle \mathscr{T}, d\varphi \rangle,  \quad \varphi \in \mathscr{D}^{k-1}(X).
\]

The current $\mathscr{T}$ is \emph{closed} if its exterior derivative vanishes. By duality  one simply has the following decompositions for the bidegree and bidimension
\[
\mathscr{D}^k(X)=\bigoplus_{p+q=k} \mathscr{D}^{p,q}(X), \quad \mathscr{D}'_k(X)=\bigoplus_{p+q=k} \mathscr{D}'_{p,q}(X).
\]

The space of smooth differential forms of bidegree $(p,p)$ contains the cone of positive differential forms. By definition, a smooth differential $(p,p)$-form $\varphi$ is \emph{positive} if
\[
\text{$\varphi(x)|_S$ is a nonnegative volume form for all $p$-planes $S \subseteq T_x X$ and $x \in X$}.
\]
Dually, a current $\mathscr{T}$ of bidimension $(p,p)$ is \emph{positive}  if 
\[
\langle \mathscr{T}, \varphi \rangle \ge 0 \ \ \text{for every positive differential $(p,p)$-form $\varphi$ on $X$}.
\]
Integrating along complex analytic subsets of $X$ provides an important class of positive currents on $X$.
If $Z$ is a $p$-dimensional complex analytic subset of $X$, then the \emph{integration current} $[Z]$ is the $(p,p)$-dimensional current defined by integrating over the smooth locus
\[
\big\langle [Z], \varphi \big\rangle = \int_{Z_{\text{reg}}} \varphi, \quad \varphi \in \mathscr{D}^{p,p}(X).
\]

One has the following important theorem of Fornaess--Sibony.

\begin{theorem}[{\cite[Corollary 2.6]{Fornaess-Sibony}}]\label{FS-concave}
The support of a positive closed current of bidimension $(k,k)$ on a complex manifold $M$ is $k$-pseudoconcave in $M.$
\end{theorem}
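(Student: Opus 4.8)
The plan is to prove Theorem~\ref{no-caps-cycles}, which asserts that every $k$-dimensional tropical cycle $\Gamma$ in $\RR^n$ has no supporting $(n-k)$-caps, thereby establishing the local $(n-k)$-convexity of its complement. The strategy follows the route of Rashkovskii indicated in the introduction: pass from the real picture in $\RR^n$ to a complex picture in $(\CC^*)^n$, produce there a positive closed current of bidimension $(k,k)$ whose support is exactly $\Log^{-1}(\Gamma)$, invoke the Fornaess--Sibony theorem (Theorem~\ref{FS-concave}) to conclude that this support is $k$-pseudoconcave, and then translate back via Corollary~\ref{log-pseudo} to deduce the absence of $(n-k)$-caps.

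The first step is to recall that a tropical cycle of dimension $k$, being a balanced weighted rational polyhedral complex, carries a canonically associated closed positive current of bidimension $(k,k)$ on $(\CC^*)^n$ whose support is $\Log^{-1}(\Gamma)$: on each maximal face $\sigma$ of $\Gamma$ with integer weight $m_\sigma$, one takes $m_\sigma$ times a suitable translation-invariant positive $(k,k)$-current supported on $\Log^{-1}(\mathrm{relint}\,\sigma)$ (concretely, the average over the compact torus $(S^1)^n$ of the current of integration along a complex linear torus orbit, appropriately scaled by a Lelong-type coefficient on the affine span of $\sigma$), and one checks that the balancing condition at each codimension-one face is exactly the closedness condition $d\TS = 0$. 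This is the construction underlying the theory of tropical currents; I would cite it (it is used in \cite{BH}) and verify: (a) positivity, which is immediate since each piece is a positive multiple of a positive current; (b) closedness, which reduces face-by-face to the combinatorial balancing identity; (c) that the support is precisely $\Log^{-1}(|\Gamma|)$, which follows since $\Log$ is a proper surjective submersion onto $\RR^n$ and the fiberwise averaging does not shrink supports.

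Next, since the support $\mathrm{Supp}(\TS) = \Log^{-1}(\Gamma)$ is the support of a positive closed current of bidimension $(k,k)$ on the complex manifold $(\CC^*)^n$, Theorem~\ref{FS-concave} gives immediately that $\Log^{-1}(\Gamma)$ is $k$-pseudoconcave in $(\CC^*)^n = \Log^{-1}(\RR^n)$. To apply Corollary~\ref{log-pseudo} I should first reduce to a bounded situation: a supporting $(n-k)$-cap $B$ for $\Gamma$ has $B \cap \Gamma$ compact, so it is contained in some large open ball $D \subseteq \RR^n$, and by translating and rescaling I may assume $D$ is (contained in) the unit polydisc domain on which the biholomorphism $(-i\log,\dots,-i\log)$ is defined; restricting $\TS$ to $\Log^{-1}(D)$ keeps it positive and closed there, so $\Log^{-1}(\Gamma \cap D)$ is $k$-pseudoconcave in $\Log^{-1}(D)$. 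Corollary~\ref{log-pseudo} then yields that $\Gamma \cap D$, hence $\Gamma$, has no supporting $(n-k)$-cap inside $D$; since every cap lives in some such $D$, $\Gamma$ has no supporting $(n-k)$-caps at all, which is exactly the assertion that $\RR^n \setminus \Gamma$ is locally $(n-k)$-convex.

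The main obstacle is the first step: constructing the tropical current and, in particular, checking rigorously that the balancing condition on $\Gamma$ is equivalent to the closedness of $\TS$, and that the support is exactly $\Log^{-1}(|\Gamma|)$ with no unexpected collapsing. The positivity and the two translation arguments are routine, and the Fornaess--Sibony input is quoted as a black box; but the careful identification of $d\TS$ with the combinatorial boundary requires writing the current on each maximal cone in coordinates adapted to its affine span, integrating against a test $(k-1,k-1)$-form, applying Stokes on the fibers, and collecting the contributions around each ridge. One subtlety to flag is that the relevant statement should be proved for tropical \emph{cycles} (balanced complexes with weights), not merely for realizable tropical varieties; the current construction works verbatim in that generality, which is what makes Theorem~\ref{no-caps-cycles} strong enough to also cover item (3) of Proposition~\ref{local-global}.
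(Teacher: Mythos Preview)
The statement you were asked to prove, Theorem~\ref{FS-concave}, is a result of Fornaess--Sibony that the paper merely \emph{cites} and does not prove; it is used throughout as a black box. Your proposal does not prove it either: you explicitly invoke Theorem~\ref{FS-concave} as an input. What you have actually written is a proof of Theorem~\ref{no-caps-cycles}, not of Theorem~\ref{FS-concave}.

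If the intended target was Theorem~\ref{no-caps-cycles}, then your argument is essentially identical to the paper's. Both associate to the tropical cycle $\mathcal{C}$ its tropical current $\mathscr{T}_{\mathcal{C}}$ (a closed positive $(k,k)$-current with support $\Log^{-1}(\mathcal{C})$), apply Theorem~\ref{FS-concave} to obtain $k$-pseudoconcavity of that support, and then use Corollary~\ref{log-pseudo} to conclude that $\mathcal{C}$ has no supporting $(n-k)$-caps. The paper compresses this into three lines, deferring the construction and the closedness-equals-balancing fact to \cite{Babaee}; you spell these out and add the (harmless, and arguably more careful) step of localizing to a bounded convex $D$ before applying Corollary~\ref{log-pseudo}. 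There is no substantive difference in strategy.
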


In the next subsection we explain the construction of \emph{tropical currents} from \cite{Babaee}. These currents are closed and positive and have support of the form $\Log^{-1}(\{\text{tropical variety}\}).$ Our exposition follows \cite{BH}.

\subsection{A current associated to a tropical cycle}
We first recall the definition of a tropical variety. 

For a given polyhedron $\sigma$ let $\text{Aff}(\sigma)$ be the affine span of $\sigma,$ and $H_{\sigma}$ be the translation of $\text{Aff}(\sigma)$ to the origin. Now assume that $\tau$ is a codimension $1$ face of a $p$-dimensional rational polyhedron $\sigma$ and $u_{\sigma / \tau }$ be the unique outward generator of the one dimensional lattice $(H_{\sigma}\cap \ZZ^n) / (H_{\tau}\cap \ZZ^n).$ 

\begin{definition}\label{BalancingCondition}
A $p$-dimensional weighted complex $\mathcal{C}$ satisfies the \emph{balancing condition} at $\tau$ if
\[
\sum_{\sigma\supset \tau} \text{w}_{\mathcal{C}}(\sigma) u_{\sigma/\tau}=0  
\]
in the quotient lattice $\ZZ^n / (H_{\tau}\cap \ZZ^n)$, where the sum is over all $p$-dimensional cells $\sigma$ in $\mathcal{C}$ containing $\tau$ as a face.
A weighted complex is \emph{balanced} if it satisfies the balancing condition at each of its codimension $1$ cells.
\end{definition}

A \emph{tropical variety} is a positive and balanced weighted complex with finitely many cells, and a \emph{tropical current} is the current associated to a tropical variety.

We next define the definition of tropical currents. Let $\mathbb{C}^*$ be the group of nonzero complex numbers. The \emph{logarithm map} is the homomorphism
\[
\text{log}: \mathbb{C}^* \longrightarrow \mathbb{R}, \qquad z \longmapsto \log |z|,
\]
and the \emph{argument map} is the homomorphism
\[
\text{arg}: \mathbb{C}^* \longrightarrow S^1, \qquad z \longmapsto z/|z|.
\]
The argument map splits the exact sequence
\[
\xymatrix{
0\ar[r]&S^1\ar[r]&\mathbb{C}^*\ar[r]^{\text{log}}&\mathbb{R}\ar[r]&0,
}
\]
giving polar coordinates to nonzero complex numbers. Let $N$ be a finitely generated free abelian group. 
There are Lie group homomorphisms
\[
\xymatrix{
&T_N \ar[dr]^{\text{arg}\otimes_\mathbb{Z}1} \ar[dl]_{-\text{log}\otimes_\mathbb{Z}1}&\\
N_\mathbb{R}&&S_N,}
\]
called the \emph{logarithm map} and the \emph{argument map} for $N$ respectively, where 
\begin{eqnarray*}
T_N&:=&\text{the complex algebraic torus $\mathbb{C}^* \otimes_\mathbb{Z} N$,}\\
S_N&:=&\text{the compact real torus $S^1 \otimes_\mathbb{Z} N$,}\\
N_\mathbb{R}&:=&\text{the real vector space $\mathbb{R} \otimes_\mathbb{Z} N$.}
\end{eqnarray*}

For a rational linear subspace $H \subseteq \RR^n$ the following exact sequences:
\[
 \xymatrix{
 0 \ar[r]& {H \cap \mathbb{Z}^n} \ar[r]& \mathbb{Z}^n \ar[r] &{\mathbb{Z}^n/(H \cap \mathbb{Z}^n)} \ar[r] & 0
}
\]

\[
 \xymatrix{
 0 \ar[r]& S_{H \cap \mathbb{Z}^n} \ar[r]& (S^1)^n = S^1\otimes_{\mathbb{Z}}  \mathbb{Z}^n \ar[r] &S_{\mathbb{Z}^n/(H \cap \mathbb{Z}^n)} \ar[r] & 0
}
\]

Define 
\[
\pi_H:\xymatrix{ \text{Log}^{-1}(H) \ar[r]^{\quad\text{Arg}}& (S^1)^n \ar[r]&  S_{\mathbb{Z}^n/(H \cap \mathbb{Z}^n)}}.
\]
\begin{definition}
Let $\mu$ be a complex Borel measure on $S_{\mathbb{Z}^n/(H \cap \mathbb{Z}^n)}$.
We define a $(p,p)$-dimensional closed current $\mathscr{T}_H(\mu)$ on $(\mathbb{C}^*)^n$ by
\[
\mathscr{T}_H:=\int_{x \in S_{\mathbb{Z}^n/(H \cap \mathbb{Z}^n)}} \big[\pi_H^{-1}(x)\big] \ d\mu(x),
\]
where $\mu$ is the Haar measure on $S_{\mathbb{Z}^n/(H \cap \mathbb{Z}^n)}$ 
\[
\int_{x \in S_{\mathbb{Z}^n/(H \cap \mathbb{Z}^n)}} d\mu(x)=1,
\]. 
\end{definition}

Let $A$ be a $p$-dimensional affine subspace of $\mathbb{R}^n$ parallel to the linear subspace $H$.
For $a \in A$,  there is a commutative diagram of corresponding translations
\[
\xymatrix{
\text{Log}^{-1}(A) \ar[d]_{\text{Log}} \ar[r]^{e^{a}}& \text{Log}^{-1}(H) \ar[d]^{\text{Log}}\\
A \ar[r]^{-a}& H.
}
\]
We define a submersion $\pi_{A}$ as the composition
\[
\pi_{A}:\xymatrix{\text{Log}^{-1}(A) \ar[r]^{e^{a} \ } & \text{Log}^{-1}(H) \ar[r]^{\pi_{H} \ \ }&S_{\mathbb{Z}^n/(H \cap \mathbb{Z}^n)}.}
\]

\begin{definition}
Let $\mathcal{C},$ be a weighted rational polyhedral complex of dimension $p$. The tropical current $\mathscr{T}_{\mathcal{C}}$ associated to ${\mathcal{C}}$ is given by 
$$
\mathscr{T}_{\mathcal{C}}= \sum_{\sigma} w_{\sigma} ~ \mathbbm{1}_{\Log^{-1}(\sigma^{\circ})}\mathscr{T}_{\text{aff}(\sigma)} ,
$$

where the sum runs over all $p$ dimensional cells $\sigma$ of $\mathcal{C}.$
\end{definition}

\begin{lemma}[{\cite[Theorem 3.1.8]{Babaee}}]
A weighted complex $\mathcal{C}$ is balanced if and only if $\mathscr{T}_{\mathcal{C}}$ is closed.
\end{lemma}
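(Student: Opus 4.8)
The plan is to compute $d\mathscr{T}_{\mathcal{C}}$ directly by a fibrewise Stokes argument and to read off its contribution along each codimension-$1$ cell $\tau$ of $\mathcal C$ as a current whose ``weight'' is precisely the balancing vector $\sum_{\sigma\supset\tau}w_\sigma\,u_{\sigma/\tau}$ at $\tau$; the current $\mathscr{T}_{\mathcal C}$ is then closed if and only if all these weights vanish, which is the balancing condition.

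First I would record that each summand $\mathscr{T}_{\text{aff}(\sigma)}$ is closed on $(\CC^*)^n$: since $\pi_{\text{aff}(\sigma)}$ is a submersion onto a compact torus, every fibre $\pi_{\text{aff}(\sigma)}^{-1}(x)$ is a closed complex $p$-dimensional submanifold of $(\CC^*)^n$, hence $[\pi_{\text{aff}(\sigma)}^{-1}(x)]$ is a closed $(p,p)$-current, and $d$ commutes with the integration in $x$ against the Haar measure. Then, for a test form $\varphi$ of degree $2p-1$, applying Stokes with corners to each manifold-with-corners $\pi_{\text{aff}(\sigma)}^{-1}(x)\cap\Log^{-1}(\sigma)$ — whose boundary lies over $\partial\sigma$, the faces of codimension $\geq 2$ contributing Haar-negligibly —
\[
\langle d\mathscr{T}_{\mathcal C},\varphi\rangle=\pm\sum_{\sigma}w_\sigma\int_{x}\int_{\pi_{\text{aff}(\sigma)}^{-1}(x)\cap\Log^{-1}(\sigma^{\circ})}d\varphi\;d\mu(x)=\pm\sum_{\tau}\sum_{\sigma\supset\tau}w_\sigma\,\varepsilon_{\sigma,\tau}\int_{x}\int_{\pi_{\text{aff}(\sigma)}^{-1}(x)\cap\Log^{-1}(\tau^{\circ})}\varphi\;d\mu(x),
\]
where $\tau$ runs over the codimension-$1$ cells of $\mathcal C$, the inner sum is over the $p$-cells $\sigma\supset\tau$, and $\varepsilon_{\sigma,\tau}\in\{\pm1\}$ records the outward orientation along $\tau$ (whose outward conormal in $\sigma^{\circ}$ is $-u_{\sigma/\tau}$). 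Write this as $d\mathscr{T}_{\mathcal C}=\pm\sum_\tau D_\tau$, with $D_\tau$ supported on $\Log^{-1}(\overline\tau)$.

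Next I would identify $D_\tau$. Fix $\tau$, choose a splitting $\ZZ^n=(H_\tau\cap\ZZ^n)\oplus\Lambda$ with induced product $(\CC^*)^n=(\CC^*)^{p-1}\times(\CC^*)^{n-p+1}$; then $\text{aff}(\tau)=\RR^{p-1}\times\{c\}$, $\text{aff}(\sigma)=\RR^{p-1}\times(c+\RR\bar u_{\sigma/\tau})$ with $\bar u_{\sigma/\tau}\in\ZZ^{n-p+1}$ the primitive image of $u_{\sigma/\tau}$, and along $\overline\tau$ the cell $\sigma$ reads as $\tau^{\circ}\times(c+\RR_{\geq0}\bar u_{\sigma/\tau})$. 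By multiplicativity of the construction $\mathscr{T}_{\bullet}(\mathrm{Haar})$ under direct sums, $\mathscr{T}_{\text{aff}(\sigma)}=[(\CC^*)^{p-1}]\times\mathscr{T}_{c+\RR\bar u_{\sigma/\tau}}$; since the first factor is (locally along $\overline\tau$) boundaryless, the inner fibre integral above equals, up to the sign $\varepsilon_{\sigma,\tau}$, the pairing of $\varphi$ with $\mathbbm{1}_{\Log^{-1}(\tau^{\circ})}\bigl([(\CC^*)^{p-1}]\times R_{\bar u_{\sigma/\tau}}\bigr)$, where $R_v$ is the translation-invariant $1$-current on the torus $\Log^{-1}(c)\cong(S^1)^{n-p+1}$ determined by $\langle R_v,\alpha\rangle=\kappa\sum_j v_j\,\overline{\alpha_j}$ ($\overline{\alpha_j}$ the Haar mean of the $d\theta_j$-component, $\kappa\neq0$ a universal constant); this last identity is a one-dimensional computation on the monomial curves $\pi_{\text{aff}(\sigma)}^{-1}(x)$, and — crucially — the sign $\varepsilon_{\sigma,\tau}$ occurring there is independent of $\sigma$. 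As $v\mapsto R_v$ is $\RR$-linear with $R_v=0$ iff $v=0$, summing over $\sigma$ gives
\[
D_\tau=\pm\,\varepsilon\,\mathbbm{1}_{\Log^{-1}(\tau^{\circ})}\bigl([(\CC^*)^{p-1}]\times R_{b_\tau}\bigr),\qquad b_\tau:=\sum_{\sigma\supset\tau}w_\sigma\,u_{\sigma/\tau}.
\]

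Finally, $D_\tau$ is $\mathbbm{1}_{\Log^{-1}(\tau^{\circ})}$ times a smooth-coefficient current, so it carries no mass on $\Log^{-1}(\overline\tau)\setminus\Log^{-1}(\tau^{\circ})=\Log^{-1}(\partial\tau)$; hence the restriction of $d\mathscr{T}_{\mathcal C}$ to $\Log^{-1}(\tau^{\circ})$ equals $\pm\varepsilon D_\tau$ (no other $D_{\tau'}$ meets $\Log^{-1}(\tau^{\circ})$). Therefore $d\mathscr{T}_{\mathcal C}=0$ forces every $D_\tau=0$, i.e.\ $R_{b_\tau}=0$, i.e.\ $b_\tau=0$; and conversely $b_\tau=0$ for all $\tau$ gives $d\mathscr{T}_{\mathcal C}=\pm\sum_\tau D_\tau=0$. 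Thus $\mathscr{T}_{\mathcal C}$ is closed if and only if $\mathcal C$ is balanced. I expect the main difficulty to be bookkeeping rather than conceptual: establishing the product formula for $\mathscr{T}_{\bullet}(\mathrm{Haar})$, and pinning down the orientations in the Stokes step so that the coefficient at $\tau$ emerges as a fixed nonzero multiple of $b_\tau$ — although already the weaker statement that each slice is \emph{some} additive, injective function of $u_{\sigma/\tau}$ suffices for the equivalence; the disappearance of the codimension-$\geq2$ skeleton is exactly the Haar-negligibility of the corners in Stokes' formula.
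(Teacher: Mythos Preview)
The paper does not supply its own proof of this lemma; it simply cites \cite[Theorem 3.1.8]{Babaee} and moves on. So there is nothing in the present paper to compare your argument against.

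That said, your outline is the standard one and is essentially what the cited reference does: localize $d\mathscr{T}_{\mathcal C}$ over the open codimension-$1$ faces $\tau^\circ$ via Stokes on the fibred manifolds-with-boundary $\pi_{\mathrm{aff}(\sigma)}^{-1}(x)\cap\Log^{-1}(\sigma)$, split off the $H_\tau$-direction to reduce the boundary term to a one-dimensional torus computation, and observe that the resulting contribution at $\tau$ depends \emph{linearly and injectively} on the image of $u_{\sigma/\tau}$ in $\ZZ^n/(H_\tau\cap\ZZ^n)$. Your remark that the sign $\varepsilon_{\sigma,\tau}$ ends up independent of $\sigma$ (so that the sum really is a fixed nonzero multiple of the balancing vector, not some alternating combination) is exactly the point one has to check carefully, and you have identified it correctly. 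One small thing to keep straight in the write-up: the balancing condition lives in the quotient $\ZZ^n/(H_\tau\cap\ZZ^n)$, and your $R_v$ is naturally a function of $v$ in that quotient (equivalently, of your lift $\bar u_{\sigma/\tau}\in\ZZ^{n-p+1}$ under the chosen splitting); make sure the injectivity statement ``$R_v=0\iff v=0$'' is phrased there and not in $\ZZ^n$.
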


Now we can simply answer the question about local convexity of complements of tropical cycles. 
\begin{theorem}\label{no-caps-cycles}
The complement of a tropical variety of dimension $k$ in $\RR^n$ is locally $(n-k)$-convex.
\end{theorem}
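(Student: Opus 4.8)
The plan is to reduce Theorem~\ref{no-caps-cycles} to the chain of results already assembled in the preceding sections. By Corollary~\ref{log-pseudo}, to prove that the complement of a $k$-dimensional tropical variety $\mathcal{C}$ in $\RR^n$ is locally $(n-k)$-convex — equivalently, that $\Gamma := \mathcal{C}$ has no supporting $(n-k)$-caps — it suffices to show that $\Log^{-1}(\Gamma)$ is $k$-pseudoconcave in $\Log^{-1}(D)$ for a convex open neighborhood $D$ of $\Gamma$; one may simply take $D = \RR^n$, so $\Log^{-1}(D) = (\CC^*)^n$. Thus the entire statement collapses to: \emph{$\Log^{-1}(\mathcal{C})$ is $k$-pseudoconcave in $(\CC^*)^n$.}

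First I would invoke the construction recalled in Section~4: to the tropical variety $\mathcal{C}$ one associates its tropical current $\mathscr{T}_{\mathcal{C}} = \sum_{\sigma} w_\sigma\, \mathbbm{1}_{\Log^{-1}(\sigma^\circ)} \mathscr{T}_{\mathrm{aff}(\sigma)}$, which is a current of bidimension $(k,k)$ on $(\CC^*)^n$. Since $\mathcal{C}$ is a tropical variety it is positive and balanced, so $\mathscr{T}_{\mathcal{C}}$ is positive (each $\mathscr{T}_{\mathrm{aff}(\sigma)}$ is an average of integration currents over the fibers $\pi_{\mathrm{aff}(\sigma)}^{-1}(x)$, hence positive, and the weights $w_\sigma$ are nonnegative) and, by the cited Lemma~\cite[Theorem 3.1.8]{Babaee}, closed. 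Next I would identify its support: because the Haar measure $\mu$ has full support on the compact torus $S_{\ZZ^n/(H\cap\ZZ^n)}$, the support of $\mathbbm{1}_{\Log^{-1}(\sigma^\circ)}\mathscr{T}_{\mathrm{aff}(\sigma)}$ is precisely the closure of $\Log^{-1}(\sigma^\circ)$, namely $\Log^{-1}(\sigma)$; summing over the finitely many facets $\sigma$ of $\mathcal{C}$ and using positivity (so there is no cancellation) gives $\operatorname{Supp}(\mathscr{T}_{\mathcal{C}}) = \Log^{-1}(|\mathcal{C}|) = \Log^{-1}(\Gamma)$.

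Then I would apply the Fornaess--Sibony theorem, Theorem~\ref{FS-concave}: the support of a positive closed current of bidimension $(k,k)$ on the complex manifold $(\CC^*)^n$ is $k$-pseudoconcave in $(\CC^*)^n$. Applied to $\mathscr{T}_{\mathcal{C}}$ this says exactly that $\Log^{-1}(\Gamma)$ is $k$-pseudoconcave in $(\CC^*)^n = \Log^{-1}(\RR^n)$. Feeding this into Corollary~\ref{log-pseudo} with $D = \RR^n$ yields that $\Gamma$ has no supporting $(n-k)$-caps, which by Definition~\ref{def-local} is precisely the assertion that $\RR^n \setminus \mathcal{C}$ is locally $(n-k)$-convex.

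The steps are individually short, so the proof is essentially a matter of correctly threading the chain Corollary~\ref{log-pseudo} $\leftarrow$ Theorem~\ref{FS-concave} $\leftarrow$ (positivity, closedness, and support of $\mathscr{T}_{\mathcal{C}}$). The one point that deserves genuine care — the main obstacle, such as it is — is the support computation: one must check that passing from the sum-of-indicator-currents presentation of $\mathscr{T}_{\mathcal{C}}$ to its support really does recover all of $\Log^{-1}(|\mathcal{C}|)$, i.e. that positivity rules out any mass cancellation between adjacent cells and that the Haar-measure averaging does not shrink the support of each piece. Once that is in hand, everything else is a direct citation.
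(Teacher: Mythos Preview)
Your proposal is correct and follows exactly the same chain as the paper's proof: construct the tropical current $\mathscr{T}_{\mathcal{C}}$, note it is positive and closed of bidimension $(k,k)$, apply Fornaess--Sibony (Theorem~\ref{FS-concave}) to conclude $\Log^{-1}(\mathcal{C})$ is $k$-pseudoconcave in $(\CC^*)^n$, and then invoke Corollary~\ref{log-pseudo}. You actually supply more detail than the paper does on the support identification $\operatorname{Supp}(\mathscr{T}_{\mathcal{C}})=\Log^{-1}(|\mathcal{C}|)$, which the paper simply asserts.
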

\begin{proof}
Since $\mathcal{C}$ is a tropical cycle of dimension $k$, the associated tropical current $\mathscr{T}_{\mathcal{C}}$ is a positive current of bidimension $(k,k)$, therefore by Theorem~\ref{FS-concave} its support, $\Log^{-1}(\mathcal{C}),$ is $k$-pseudoconcave in $(\CC^*)^n$, and by Corollary~\ref{log-pseudo}, $\mathcal{C}$ has no supporting $(n-k)$-caps.  
\end{proof}


\section{A recipe for counter-examples}\label{sec-counterexample}
In this section we discuss the global case of the Nisse--Sottile conjecture, for which we provide a family of counter-examples to treat all dimensions where the conjecture is open. The following theorem collects the constructions of this section.

\begin{theorem}\label{thm-examples}
For any pair of integers $n\geq 4,$ and $2\leq k \leq n-2,$ there is a tropical variety of dimension~$k$ in $\RR^n$ (in fact, a fan) whose complement is not $(n-k)$-convex.
\end{theorem}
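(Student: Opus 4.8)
The plan is to construct the counterexamples by reducing to a single well-chosen fan in the lowest relevant dimension and then building the general case from it by taking products with linear spaces and suspensions. First I would observe that it suffices to treat the case $k=2$ and $n=4$: given a $2$-dimensional fan $F_{2,4}\subseteq\RR^4$ whose complement is not $2$-convex, one obtains examples in all the required dimensions by forming the product $F_{2,4}\times\RR^{n-4}$ inside $\RR^n$ (raising both $k$ and $n$ by the same amount while keeping the codimension $n-k=2$) and, to increase the codimension, by taking an appropriate cone or iterated suspension over $F_{2,4}$ that raises $n-k$ while keeping $k$ fixed. One must check that these operations preserve balancedness (hence the property of being a tropical variety), and that they do not destroy the non-convexity; the latter follows because a nontrivial class in $\widetilde H_{n-k-1}(L\cap X)$ dying in $\widetilde H_{n-k-1}(X)$ can be transported through a Künneth/Mayer--Vietoris argument, using that $\RR^{n-4}$ and cone points are contractible.

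The heart of the matter is therefore the single example $\widehat F_{2,4}$. The strategy here is to exhibit a fan whose complement $X=\RR^4\setminus\widehat F_{2,4}$ contains an affine $2$-plane $L$ such that $L\cap X$ is (homotopy equivalent to) a circle, i.e. $\widetilde H_1(L\cap X)\cong\ZZ$, while this circle bounds in $X$ — equivalently, $\widehat F_{2,4}$ intersected with that plane looks like a point or a small compact arrangement that the circle winds around locally but which can be "filled in" globally by pushing the disk off to infinity through the complement. Concretely, I would take $\widehat F_{2,4}$ to be the recession fan of a configuration of $2$-planes through the origin whose link is a graph in $S^3$; the key quantitative input is that the link of a tropical surface fan in $\RR^4$ is a balanced graph in the $3$-sphere, and one wants this graph to fail the relevant Lefschetz/convexity property. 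This is where one should exploit the contrast with codimension one: in codimension one the complement of a tropical hypersurface is a union of convex polyhedra, forcing convexity, but in codimension $2$ the complement of a fan has the homotopy type of the complement of a graph in $S^3$, which can have complicated, non-convex-looking $\pi_1$ and $H_1$.

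The main obstacle, and the step I expect to require the most care, is verifying simultaneously the two homological facts: (i) that there is an affine $2$-plane $L$ with $i_L$ having nontrivial kernel — this needs an explicit computation of $H_1(L\cap X)$ and of the map into $H_1(X)$, most cleanly done by realizing $X$ as (deformation retract onto) the complement of a link graph in $S^3$ and computing via Mayer--Vietoris or an Alexander-duality argument; and (ii) that the weighted fan is genuinely balanced, i.e. at every ridge the outward primitive normals weighted by the $w_\sigma$ sum to zero in the appropriate quotient lattice. Balancing is a finite linear-algebra check once the fan is written down, so the real difficulty is choosing the combinatorics of the planes so that both (i) and (ii) hold, and then confirming that the chosen $L$ does witness the failure of $(n-k)$-convexity rather than merely local non-convexity (which, by Theorem~\ref{no-caps-cycles}, is impossible). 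I would finish by recording the explicit fan, listing its maximal cones and weights, checking balancing cell-by-cell, exhibiting $L$ and a generator of $\ker i_L$, and then invoking the product/suspension reductions above to cover all $n\geq 4$ and $2\leq k\leq n-2$.
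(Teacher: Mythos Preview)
Your reduction strategy is on the right track and partly matches the paper: the paper too passes from the $2$-dimensional case to general $k$ by taking $\RR^{k-2}\times\widehat{\mathcal{F}}_{2,n}$. Your further idea of reducing to a single base example in $\RR^4$ and then raising the codimension is different from the paper (which constructs $\widehat{\mathcal{F}}_{2,n}$ separately for every $n\geq 4$), and with care it can be made to work, but your description of it is wrong: ``an appropriate cone or iterated suspension over $F_{2,4}$'' would \emph{raise} the dimension of the fan, not keep $k$ fixed. What actually works is the trivial inclusion $F_{2,4}\subseteq\RR^4\times\{0\}\subseteq\RR^n$, which suspends the \emph{complement} (not the fan) and shifts the relevant homology up by one degree; the witnessing plane is $L\times\RR^{n-4}$ and the kernel class is carried by the suspension isomorphism.

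The genuine gap, however, is the base case itself. You correctly identify what is needed --- a positively weighted balanced $2$-fan in $\RR^4$ whose link is a graph $G\subset S^3$ for which some slice of $S^3\setminus G$ carries a $1$-cycle dying in $S^3\setminus G$ --- but then you simply propose to ``record the explicit fan, list its maximal cones and weights, check balancing cell-by-cell, exhibit $L$.'' That sentence \emph{is} the theorem; the difficulty is precisely producing such a fan, and small or ad hoc balanced graphs do not exhibit this behaviour (indeed, Theorem~\ref{no-caps-cycles} and the codimension-one results suggest why generic attempts fail). The paper's construction is specific: one takes the normal fan of a Cayley polytope of generic polygons so that its link $X\subset S^{n-1}$ avoids a fixed $(n-2)$-subspace $\widetilde\ell$, equips the $2$-skeleton with the Minkowski weight $p^{n-2}\mu$ to get positivity and balancing for free, then passes to a locally isometric \emph{double cover} $\widetilde X\to X$ along the surjection $\pi_1(X)\twoheadrightarrow\pi_1(S^1)$, perturbs into general position (this is where $n\geq 4$ enters), and finally deduces the nontrivial kernel via Alexander duality from the fact that $\widetilde X\setminus\widetilde L^+$ has two components. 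None of this mechanism appears in your proposal, so as it stands the proposal is a description of the target rather than a proof.
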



The rest of this section is devoted to  construction of a $k$-dimensional balanced fan $\widehat{\mathcal{F}}_{k,n}\subseteq \RR^{n+k-2},$ for $k\geq 2$ and $n\geq k+2$, which is not globally $(n-2)$-convex. In order to achieve this we construct a $2$-dimensional balanced fan $\widehat{\mathcal{F}}_{2,n}\subseteq \RR^n ,$ and let 
$$
\widehat{\mathcal{F}}_{k,n}:= \RR^{k-2} \times \widehat{\mathcal{F}}_{2,n} \subseteq \RR^{n+k-2}.
$$

Consider oriented hyperplanes $L,I$. Let $\widetilde{L}^+:= L\cap I^+$ and $\widetilde{\ell}:=\partial \widetilde{L}^+= L\cap I.$ Let $P$ denote an $n$-polytope such that 
\begin{itemize}
\item All faces that do not intersect $\widetilde{\ell}$ are simplicial, and
\item The normal fan $\Sigma$ of $P$, restricted to all remaining faces, is a subdivision of $\widetilde{\ell}$.
\end{itemize}
Such a polytope is easily constructed, for instance as a \textit{Cayley polytope} of $n-1$ generic polygons in $\mathbb{R}^2=\widetilde{\ell}^\bot \subset \mathbb{R}^n$, \textit{i.e.}, consider $e_1, \cdots, e_{n-1}$ a minimal set of vectors in $\widetilde{\ell}$ that sum to $0$, a \textit{circuit}, and let $P_1, \cdots, P_{n-1}$ generic polygons, then 
\[P= \mathrm{conv}\{P_i+e_i\}\]
satisfies the conditions above, see also Figure~\ref{fig:Cayley}. In fact, what we are studying is the 1-skeleton of the Cayley-complex, introduced by the first author in \cite{AS}.

 Now, we wish to construct a nonnegative weight on the $2$-skeleton of the fan $\Sigma$ over $P$. For this, we can triangulate the non-simplicial faces of $\Sigma$, obtaining a simplicial fan $\Sigma'$. Now, we can consider the polytope $P$ as a conewise linear function that induces a nef divisor $p$ in the toric variety $X_{\Sigma'}$. Then $p$ acts on Chow cohomology, and therefore Minkowski weights, as described in \cite{Fulton-Sturmfels}. Consider the weight $p^{n-2} \mu$, where $\mu$ is the unique positive Minkowski $n$-weight on $\Sigma'$. This is the desired nonnegative weight on the $2$-skeleton of $\Sigma$, and we denote its support by~$\mathcal{F}$.

\begin{figure}
\begin{center}
\includegraphics[scale=0.13]{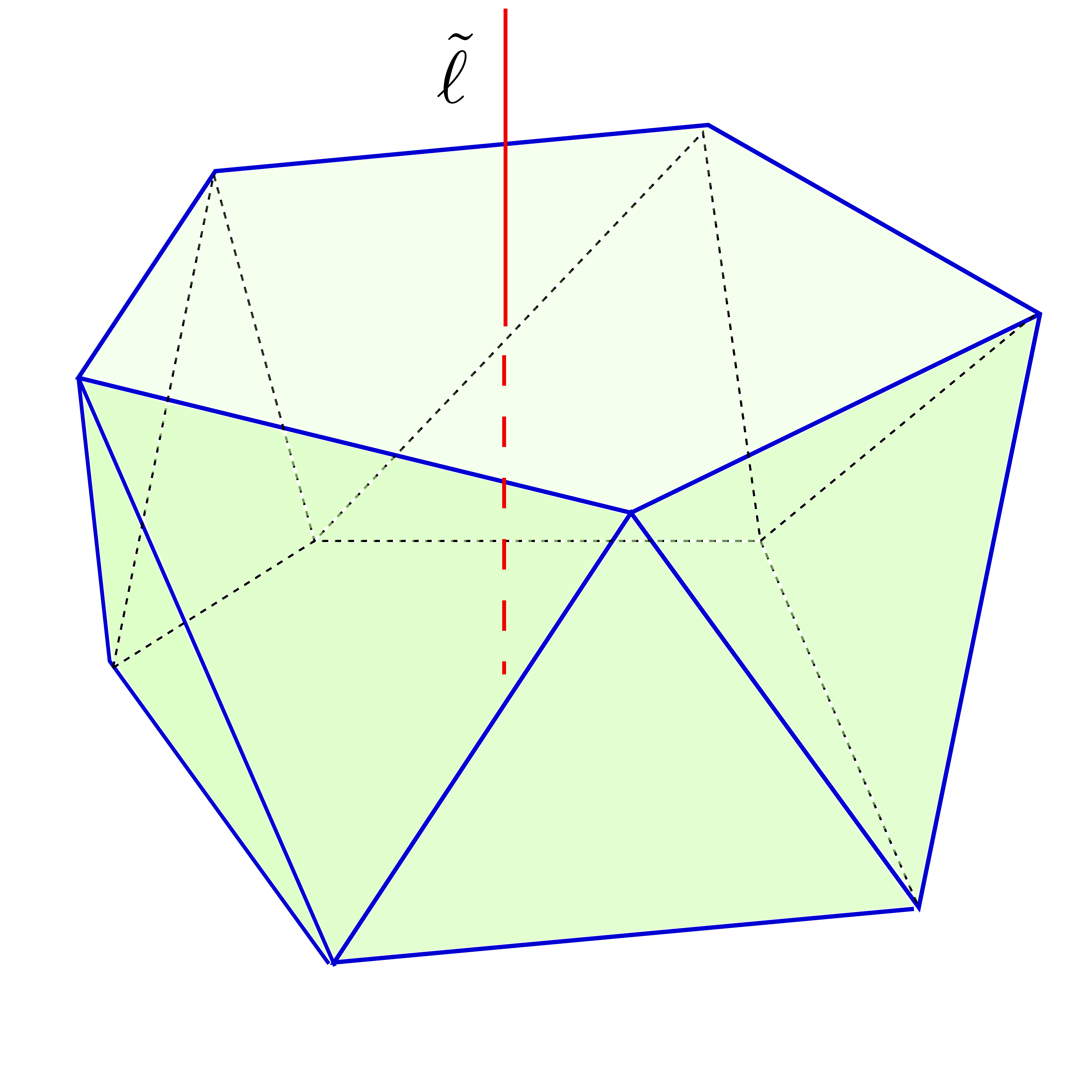}
\caption{\footnotesize Up to now, the construction works even in dimension $3$, giving a positive weight on the $1$-skeleton of a pentagonal antiprism. Only when we perturb the double cover into general position do we have to restrict to dimension $\ge 4$.}
\label{fig:Cayley}
\end{center}
\end{figure}

We may assume, by applying an isometry of $\mathbb{R}^n$, that $L$ and $\mathcal{F}$ intersect transversally (\textit{i.e.}, the linear spans of faces of $\mathcal{F}$ with $L$ are transversal).

Consider $X=\mathcal{F}\cap S^{n-1}$, as a $1$-dimensional ``tropical cycle'' in $S^{n-1}$. Since $X$ does not intersect $\widetilde{\ell}$, the orthogonal projection along $\widetilde{\ell}$ provides a surjective map
\[\pi_1(X)\ \longrightarrow\ \pi_1(\mathbb{R}^2\setminus \{\mathbf{0}\})\ \cong\ \pi_1(S^1)\ \cong\ \mathbb{Z}\]
Consider the Riemannian double cover (\textit{i.e.}, the locally isometric double cover) of $S^1$. It lifts to a locally isometric double cover $\widetilde{X}$ of $X$, which is close to the fan we want. 

To understand this directly, let $V$ denote the vertices of $X$, and let $V'$ denote a second copy of these vertices. Add geodesic edges
\begin{itemize}
\item From $a$ to $b$ and $a'$ to $b'$ if the corresponding vertices in $X$ are connected by an edge in $X$ that does not intersect 
$\widetilde{L}^+$, and
\item From $a$ to $b'$ and from $a'$ to $b$ if the corresponding vertices in $X$ are connected by an edge in $X$ that intersects $\widetilde{L}^+$ and goes from $a$ in $L^-$ to $b$ in $L^+$.
\end{itemize}
The resulting graph is $\widetilde{X}$.

Unfortunately, the edges are doubled in this cover, so the graph is not embedded. Hence we perturb $\widetilde{X}$ into general position, while the weights of the balancing change continuously as the balancing in $X$ is stable under small perturbations. Since $\dim X<\frac{n-1}{2}$, this ensures we obtain an embedded graph, and positivity is preserved provided the perturbation is small enough.
Let $\widehat{\mathcal{F}}_{2,n}$ denote the balanced fan resulting from coning over the edges of this graph. Finally, $\widetilde{X}\setminus \widetilde{L}^+$ consists of two connected components as we took a double cover, and in particular the map
\[\widetilde{H}^0(\widetilde{X})\ \longrightarrow\ \widetilde{H}^0(\widetilde{X}\setminus \widetilde{L}^+)\]
has a nontrivial cokernel. 

Hence, by Alexander duality, where we interpret $\widetilde{L}^+\cap S^{n-1}$ as a disk by replacing it with a small neighborhood, we obtain that 
\[\widetilde{H}_{n-2}( S^{n-1} \setminus\widetilde{X} )\ \longrightarrow\ \widetilde{H}_{n-2}(S^{n-1}\setminus\widetilde{X},(\widetilde{L}^+ \cap S^{n-1})\setminus\widetilde{X})\]
has a nontrivial cokernel. But then by the long exact sequence in relative homology, we have
\[
\widetilde{H}_{n-3}((\widetilde{L}^+ \cap S^{n-1})\setminus\widetilde{X})\ \longrightarrow\ \widetilde{H}_{n-3}(S^{n-1}\setminus\widetilde{X})
\]
has a nontrivial kernel. The construction is complete, as $S^{n-1}\setminus\widetilde{X}$ is a deformation retract of $\mathbb{R}^n\setminus\widehat{\mathcal{F}}_{2,n}$, and for any $(n-2)$-dimensional  affine space $\ell$ in the interior of $\widetilde{L}^+$ parallel to $\widetilde{\ell},$  $(\widetilde{L}^+ \cap S^{n-1})\setminus\widetilde{X}$ is homeomorphic to $(\mathbb{R}^n\setminus \widehat{\mathcal{F}}_{2,n})\cap \ell.$

\section{Relation to the generalized Hodge conjecture for positive currents}
In this section we discuss that the examples in the preceding section do not in fact satisfy the Hodge-Riemann relations. In particular, following \cite{BH}, they give rise to a family of counter-examples for a (now disproven) Hodge-type conjecture for positive currents introduced by Demailly \cite{DemaillyHodge, DemaillyBook2}, hence generalizing the main result of \cite{BH} to any dimension and codimension greater than $1.$

\begin{conj} The Hodge conjecture for strongly positive currents: 
If $\mathscr{T}$ is a $(p,p)$-dimensional strongly positive closed current on $X$ with cohomology class
\[
\{\mathscr{T}\} \in  \mathbb{R} \otimes_\mathbb{Z} \big(H^{2q}(X,\mathbb{Z})/\textrm{tors}~ \cap H^{q,q}(X) \big),
\]
then $\mathscr{T}$ is a weak limit of the form 
\[
\mathscr{T}=\lim_{i \to \infty} \mathscr{T}_i, \quad \mathscr{T}_i=\sum_j \lambda_{ij} [Z_{ij}],
\]
where $\lambda_{ij}$ are positive real numbers and $Z_{ij}$ are $p$-dimensional subvarieties of $X$.
\end{conj}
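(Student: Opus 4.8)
The plan is to \emph{disprove} this conjecture; the resulting counterexamples are exactly Theorem~C. For every admissible pair $(n,k)$ it suffices to produce one smooth projective toric variety $X$ and one closed positive current on it for which the conclusion fails, and the current we use is the tropical current attached to the fan $\widehat{\mathcal{F}}_{k,n}$ of Section~\ref{sec-counterexample}. Write $\widehat{\mathcal{F}}$ for this $k$-dimensional rational balanced fan in $\RR^n$, whose complement is not globally $(n-k)$-convex by Theorem~\ref{thm-examples}; perturbing to rational position and rescaling, we may assume its weights are positive integers.

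First, form $\mathscr{T}:=\mathscr{T}_{\widehat{\mathcal{F}}}$, the associated tropical current on $(\CC^*)^n$. It is closed by the criterion of \cite{Babaee} recalled above, and, being a positive superposition of integration currents over translates of the complex subtori inside $\Log^{-1}(\mathrm{aff}(\sigma))$, it is positive — indeed \emph{strongly} positive, so that it actually satisfies the hypothesis of the conjecture. Next choose a smooth projective toric variety $X=X_{\Sigma}$ of dimension $n$ whose fan $\Sigma$ refines a complete fan containing $\widehat{\mathcal{F}}$ as a subfan (take any completion of $\widehat{\mathcal{F}}$ and an equivariant smooth projective subdivision). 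By the extension results of \cite{BH}, the trivial extension of $\mathscr{T}$ across $X\setminus(\CC^*)^n$ is again a closed, strongly positive current of bidimension $(k,k)$ on $X$; the points to check are that $\mathscr{T}$ has locally finite mass near the toric boundary and that closedness persists, and this is precisely where one uses that $\widehat{\mathcal{F}}$ is a \emph{fan} and that $\Log$ is proper over each of its cones.

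For item~(1), under the Fulton--Sturmfels dictionary \cite{Fulton-Sturmfels} the class $\{\mathscr{T}\}$ is the class determined by the integral Minkowski weight carried by $\widehat{\mathcal{F}}$; since on a smooth complete toric variety every such weight represents an integral class of Hodge type $(n-k,n-k)$, one obtains
\[
\{\mathscr{T}\}\ \in\ H^{2(n-k)}(X,\ZZ)/\mathrm{tors}\ \cap\ H^{n-k,n-k}(X),
\]
which in particular is of the type featuring in the conjecture. For item~(2), suppose toward a contradiction that $\mathscr{T}=\lim_i\sum_j\lambda_{ij}[Z_{ij}]$ with $\lambda_{ij}\ge 0$ and each $Z_{ij}$ a $k$-dimensional subvariety of $X$. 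The weak-approximation criterion of \cite{BH} then forces the tropical cycle $\widehat{\mathcal{F}}$, equipped with its Chow-ring (polytope-algebra) structure, to satisfy the pertinent Hodge--Riemann relation. But by the Hodge-theoretic package for such rings proved in \cite{AHK}, read — in the spirit of \cite{AB} — as a tropical Lefschetz section theorem, validity of the Hodge--Riemann relations for $\widehat{\mathcal{F}}$ would imply that $\RR^n\setminus\widehat{\mathcal{F}}$ is globally $(n-k)$-convex, contradicting Theorem~\ref{thm-examples}. Hence no such approximation exists, and the conjecture fails for every bidimension $(k,k)$ with $n\ge 4$ and $2\le k\le n-2$.

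The crux is this last step's two-way dictionary. One has to isolate from \cite{BH} the precise Hodge--Riemann inequality that is obstructed by weak approximability — and verify that \emph{nonnegative} coefficients, not merely positive ones, already drive that argument, so that the stronger form of item~(2) holds — and to extract from \cite{AHK} the converse implication ``Hodge--Riemann relations $\Rightarrow$ $(n-k)$-convexity of the complement'' in a form that applies to the \emph{non}-matroidal fan $\widehat{\mathcal{F}}$. Converting the combinatorial-topological non-convexity built in Section~\ref{sec-counterexample} into this algebraic failure of the Hodge--Riemann relations is the substantive part; by comparison, the closed strongly positive extension to $X$, the integrality and Hodge type of $\{\mathscr{T}\}$, and the strong positivity of $\mathscr{T}$ are routine.
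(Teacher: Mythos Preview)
Your outline has the right overall architecture---build $\mathscr{T}_{\widehat{\mathcal{F}}}$, extend to a smooth projective toric $X_\Sigma$, compute the class via Minkowski weights, then obstruct approximability by a Hodge-index violation---but the key step you rely on does not exist, and one essential ingredient is missing.

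\textbf{The missing implication.} You argue: approximability $\Rightarrow$ Hodge--Riemann for $\widehat{\mathcal{F}}$, and Hodge--Riemann $\Rightarrow$ global $(n-k)$-convexity of $\RR^n\setminus\widehat{\mathcal{F}}$, so non-convexity yields non-approximability. The second implication is not a theorem in \cite{AHK} or \cite{AB}; those results concern matroidal (resp.\ Mikhalkin-smooth) fans, and there is no general statement ``Hodge--Riemann on a fan forces convexity of its complement''. You flag this yourself as ``the substantive part'', but it is not substantive---it is simply unavailable. The paper does \emph{not} deduce the Hodge-index failure from the non-convexity. Both are derived \emph{independently} from the explicit double-cover structure of $\widehat{\mathcal{F}}_{2,n}$: one produces two pullback divisor classes $\omega,\omega'\in A^1(\widehat{\mathcal{F}}_{2,n})$ (coming from the two sheets $V$ and $V'$) with $\deg(\omega^2)=\deg(\omega'^2)>0$ and $\deg(\omega\omega')=0$, so the intersection pairing on $A^1$ has two positive eigenvalues. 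That is the direct, computable contradiction to the signature $(1,*)$ forced by approximability; the non-convexity of the complement plays no logical role in this part.

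\textbf{The missing extremality step.} In \cite{BH} the passage from ``$\mathscr{T}=\lim_i\sum_j\lambda_{ij}[Z_{ij}]$'' to ``the intersection matrix $(\{\mathscr{T}\}\cdot D_i\cdot D_j)$ has at most one positive eigenvalue'' goes through Milman's converse to Krein--Milman: one first reduces to $\mathscr{T}=\lim_i\lambda_i[Z_i]$ with $Z_i$ irreducible, and for that one needs $\mathscr{T}$ to be \emph{extremal} in the cone of closed positive currents. This requires $\widehat{\mathcal{F}}$ to be \emph{strongly extremal} (non-degenerate and uniquely balanced up to scale), which is obtained by a further perturbation beyond the general-position perturbation of Section~\ref{sec-counterexample}. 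You never invoke extremality, so the Hodge-index obstruction you want to cite from \cite{BH} does not apply to your current as stated.

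In short: replace the non-convexity $\Rightarrow$ Hodge--Riemann-failure step by the explicit two-eigenvalue computation from the double cover, and insert the strong-extremality perturbation before forming $\mathscr{T}$.
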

\begin{remark}{Demailly in \cite{DemaillyHodge} proved that the above statement implies the Hodge conjecture, and in \cite{DemaillyBook2} observed that if $\mathscr{T}$ in the above statement is assumed to a real current, and $\lambda_{ij}$'s are real numbers, then the statement is equivalent to the Hodge conjecture.}

\end{remark}

\begin{theorem}
For any pair of integers $n\geq 4,$ and $2\leq k \leq n-2,$ 
there is a $n$-dimensional smooth projective toric variety $X$ and a $(k,k)$-dimensional positive closed current $\mathscr{T}$ on $X$ with the following properties:
\begin{enumerate}[(1)]
\item The cohomology class of $\mathscr{T}$ satisfies
\[
\{\mathscr{T}\} \in H^{2(n-k)}(X,\mathbb{Z})\slash\textrm{tors}~ \cap H^{n-k,n-k}(X).
\]
\item The current $\mathscr{T}$ is not a weak limit of currents of the form
\[
\lim_{i \to \infty} \mathscr{T}_i, \quad \mathscr{T}_i=\sum_j \lambda_{ij} [Z_{ij}],
\]
where $\lambda_{ij}$ are nonnegative real numbers and $Z_{ij}$ are $k$-dimensional subvarieties  in $X$.
\end{enumerate}
\end{theorem}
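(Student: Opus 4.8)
The plan is to combine three ingredients already set up or cited in the paper: the non-$(n-k)$-convex tropical fans $\widehat{\mathcal F}_{k,n}$ from Section~\ref{sec-counterexample}, the theory of tropical currents of \cite{Babaee}, and the cohomological/obstruction machinery of \cite{BH} and \cite{AHK}. First I would fix the fan $\mathcal C := \widehat{\mathcal F}_{k,n} \subseteq \RR^n$ produced by Theorem~\ref{thm-examples}, which is a $k$-dimensional positive balanced rational fan whose complement in $\RR^n$ is not globally $(n-k)$-convex. Choose a complete unimodular rational fan $\Sigma$ in $\RR^n$ refining $\mathcal C$ that is moreover the normal fan of a smooth projective toric variety $X = X_\Sigma$; this exists after a further subdivision. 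The tropical current $\TS := \mathscr{T}_{\mathcal C}$ on $(\CC^*)^n$ extends by $0$ (more precisely, by taking closure of its support, using that $\mathcal C$ is a fan so the closure adds only toric-boundary strata) to a closed positive $(k,k)$-dimensional current on $X$, still denoted $\TS$; here one invokes the trivial extension results of \cite{Babaee}/\cite{BH} for tropical currents of fans.

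Next I would verify property~(1): the cohomology class $\{\TS\} \in H^{2(n-k)}(X,\RR)$ is, by the computation of \cite[§4--5]{BH}, the one dual to the Minkowski weight underlying $\mathcal C$, and since $X$ is a smooth projective toric variety its cohomology is of Hodge--Tate type, so $H^{2(n-k)}(X,\ZZ)/\text{tors} = H^{2(n-k)}(X,\ZZ)/\text{tors} \cap H^{n-k,n-k}(X)$; the weight on $\mathcal C$ may be taken integral (it is $p^{n-2}\mu$ with $\mu$ the integral positive Minkowski top-weight, up to clearing denominators), so $\{\TS\}$ lies in the integral lattice intersected with $H^{n-k,n-k}(X)$, as required. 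For property~(2), the key is the criterion of \cite{BH}: a tropical current $\mathscr{T}_{\mathcal C}$ of a fan is a weak limit of nonnegative combinations of integration currents on $k$-dimensional subvarieties only if the associated Minkowski weight class satisfies the Hodge--Riemann relations in the Chow ring of (a suitable toric compactification of) $\mathcal C$ — equivalently only if it is "strongly extremal"/"HR-compatible" in the sense of \cite{BH}, \cite{AHK}. So it suffices to show that the class of $\mathcal C = \widehat{\mathcal F}_{k,n}$ violates the Hodge--Riemann bilinear relations. Since $\widehat{\mathcal F}_{k,n} = \RR^{k-2}\times \widehat{\mathcal F}_{2,n}$, by the Künneth/product behaviour of the Hodge--Riemann property it is enough to treat $\widehat{\mathcal F}_{2,n}$, i.e. the $2$-dimensional fan; and here the failure of $(n-2)$-convexity of the complement translates, via the topological argument of Section~\ref{sec-counterexample} (the nontrivial kernel of $\widetilde H_{n-3}((\widetilde L^+\cap S^{n-1})\setminus \widetilde X)\to \widetilde H_{n-3}(S^{n-1}\setminus\widetilde X)$) together with the dictionary of \cite{AHK} between such cycle-theoretic degeneracies and the signature of the Hodge--Riemann form, into the statement that the Hodge--Riemann relations fail for this class. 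Concretely, I would show that the Poincaré pairing $q(x,x) = \deg(x \cdot \ell^{\,\bullet} \cdot x)$ restricted to the relevant primitive piece is not negative-definite, using that the "extra" homology class detected in the complement gives an element on which the pairing has the wrong sign.

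The main obstacle, as I see it, is the last step: making precise the passage from "the complement of $\widehat{\mathcal F}_{k,n}$ is not $(n-k)$-convex" to "the class of $\widehat{\mathcal F}_{k,n}$ fails the Hodge--Riemann relations", and hence to the non-approximability in~(2). The clean route is not to re-derive this from convexity at all, but to recompute directly: realize the (integral, positive) class of $\widehat{\mathcal F}_{2,n}$ in the Chow ring $A^\bullet(X_{\Sigma'})$, identify a primitive vector built from the two sheets of the double cover $\widetilde X$, and evaluate the Hodge--Riemann quadratic form on it to see it is not of the predicted sign — the double-cover construction is engineered precisely so that this vector exists. Then \cite[Theorem]{BH} (the "only if" direction of its approximation criterion for tropical currents) applies verbatim: a $(k,k)$-dimensional tropical current of a fan whose class violates Hodge--Riemann cannot be a weak limit $\lim_i \sum_j \lambda_{ij}[Z_{ij}]$ with $\lambda_{ij}\ge 0$ and $Z_{ij}$ $k$-dimensional subvarieties of $X$. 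Combining this with~(1) gives the theorem; the toric-compactification bookkeeping (choosing $\Sigma'$ smooth, projective, and refining $\mathcal C$ simultaneously, and checking that trivial extension commutes with taking cohomology classes) is routine but must be spelled out carefully, since \cite{BH} is stated for the specific dimensions it treats and we need the product fans $\RR^{k-2}\times\widehat{\mathcal F}_{2,n}$ to be handled uniformly for all admissible $(k,n)$.
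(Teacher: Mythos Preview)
Your overall architecture is close to the paper's, but there is a genuine gap in the logical skeleton for property~(2): you have misidentified the role of \emph{strong extremality}. You write that the criterion of \cite{BH} is ``approximable $\Rightarrow$ Hodge--Riemann'', and that this is ``equivalently'' the condition of being strongly extremal/HR-compatible. This conflates two independent ingredients. The Hodge index obstruction (Step~5 of the paper, \cite[Proposition~5.9]{BH}) only blocks approximation by sequences $\lambda_i[Z_i]$ with a \emph{single} irreducible $Z_i$ per term: it is the cohomology class of each individual $Z_i$ that forces the intersection matrix to have at most one positive eigenvalue. To upgrade this to non-approximability by \emph{positive sums} $\sum_j \lambda_{ij}[Z_{ij}]$, one needs $\mathscr{T}$ to generate an \emph{extremal} ray in the cone of closed positive currents, so that Milman's converse to Krein--Milman reduces the sum case to the single-variety case (Step~1). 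Strong extremality of the tropical fan is exactly what guarantees extremality of the current (\cite[Theorem~2.12]{BH}, Steps~2--3), and it is \emph{not automatic} for $\widehat{\mathcal F}_{k,n}$: the paper perturbs the fan to make it uniquely balanced and non-degenerate (Step~6). Without this step your argument does not close, and there is no black-box ``only if'' theorem in \cite{BH} that applies verbatim to arbitrary fans.

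Your first proposed route---deriving failure of Hodge--Riemann from non-$(n-k)$-convexity of the complement via a ``dictionary of \cite{AHK}''---does not exist in the literature, and the paper does not attempt it. You are right that the clean route is a direct computation on $\widehat{\mathcal F}_{2,n}$, but ``identify a primitive vector built from the two sheets'' is too vague. The paper's computation (Step~7) defines two pullback homomorphisms $\varphi,\varphi'\colon A^*(\mathcal F_X)\to A^*(\widehat{\mathcal F}_{2,n})$, one for each sheet $V,V'$ of the double cover, sets $\omega:=\varphi(\text{ample})$ and $\omega':=\varphi'(\text{ample})$, and checks $\deg(\omega^2)=\deg(\omega'^2)>0$ while $\deg(\omega\omega')=0$ because each class is linear on the opposite sheet. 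This gives the intersection pairing on $A^1$ two positive eigenvalues, contradicting Step~5. The product with $\RR^{k-2}$ is handled by passing to $(\mathbb{P}^1)^{k-2}\times X_\Sigma$ and cupping with the hyperplane classes from the $(\mathbb{P}^1)^{k-2}$ factor.
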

\begin{proof}
The proof goes along the lines of the proof of Theorem 5.1 in \cite{BH}.
\begin{enumerate}
\item[Step 1.] Recall that an extremal current $\TS$ is a current which generates an extremal ray in the cone of closed positive currents, \textit{i.e.}, if $\TS= \TS_1+\TS_2,$ are positive and closed, then for $i=1,2$ there are $\lambda_i>0$, with $\TS_i=\lambda_i \TS_i.$ By application of Milman's converse to the Krein-Milman, the approximability of an extremal current as $
\lim_{i \to \infty} \mathscr{T}_i,$ where $ \mathscr{T}_i=\sum_j \lambda_{ij} [Z_{ij}],$
reduces to the question of approximability of $\TS$ by a sequence of currents $
\lim_{i \to \infty} \mathscr{T}_i,$ where $ \mathscr{T}_i= \lambda_{i} [Z_{i}]$\\
\item[Step 2.] A tropical variety $\mathcal{C}$ is called strongly extremal if \textit{i.e.}, it does not lie in any affine subspace of $\RR^n$, and it can be uniquely weighted up to a multiple to become balanced. It is shown that in  \cite[Theorem 2.12]{BH} that if $\Ccal$ is strongly extremal, then the associated tropical current $\mathscr{T}_{\Ccal}$ is an extremal current. \\

\item[Step 3.] Simple application of El~Mir-Skoda theorem (\cite{DemaillyBook1}) shows that the tropical currents can be extended by zero to any toric variety. Moreover, by \cite[Proposition 4.9]{BH} extremality of tropical currents is preserved after extension {to a \emph{compatible} toric variety, \textit{i.e.}, fibers of $\mathscr{T}_{\Ccal}$ transversally intersect the invariant divisors of the toric variety.} \\ 

\item[Step 4.] By \cite[Theorem 4.7]{BH}, for a $k$-dimensional fan $\FC$, and a compatible smooth projective toric variety $X_{\Sigma}$ the cohomology class of the closed positive current $\{\overline{\mathscr{T}}_{\FC}\} \in H^{k,k}(X_ \Sigma)$ can be presented by $\FC$ itself. By \cite{Gil-Sombra}, existence of such a compatible fan $\Sigma$ is secured. \\
  
\item[Step 5.] Assume that $\{\overline{\mathscr{T}}_{\FC}\}$ is approximable by the currents of the form $\lambda_i  [Z_i],$ {where $\lambda_i$ are nonnegative real numbers and $Z_i$ are irreducible subvarieties. Consider the nef divisors $H_1, H_2, \dots$ on $X$, and the intersection matrix $M= [m_{ij}]$ with}
$$
m_{ij}= \{H_1\}\cup \dots \cup \{H_{p-2}\}\cup  \{\overline{\mathscr{T}}_{\FC}\}. D_i.D_j,
$$
{where each $D_i$ is an invariant divisor of $X_{\Sigma}$ corresponding a ray. Then, $M$ has at most one positive eigenvalue as a result of Hodge index theorem. See \cite[Proposition 5.9]{BH}}.\\ 

\item[Step 6.] By perturbing the divisor class, we can assume that the examples $\widehat{\mathcal{F}}_{k,n}$ of Section 5 are uniquely balanced up to a multiple. As these tropical cycles are stable under small perturbations of their faces, and they preserve their balancing property, we can assume that it also does not lie in any proper affine subspace of $\mathbb{R}^n$. In particular, we obtain that the fans $\widehat{\mathcal{F}}_{k,n}$ can be chosen as strongly extremal. In consequence, $\TS_{\widehat{\mathcal{F}}_{k,n}}$ is an extremal current by Step 2. \\

\item[Step 7.] There exist different natural choices of pullback maps of Chow rings from the fan $\mathcal{F}_X$ over $X$ to the fan {$\widehat{\mathcal{F}}_{2,n}$} over $\widetilde{X}$. For instance, a first choice is to map a degree $1$-class in $\mathcal{F}_X$ to the rays over $V$, and extend linearly to the faces over $V'$. Denote the induced map of Chow rings $\varphi$. 

Symmetrically, we can also choose to map a degree $1$-class in $\mathcal{F}_X$ to the rays over $V'$, and extend linearly to the faces over $V$.
Denote the induced map of Chow rings $\varphi'$. 

Notice that the ample divisors on $\mathcal{F}_X$ pull back to ample divisors on {$\widehat{\mathcal{F}}_{2,n}$} under $\varphi+\varphi'$. We wish to consider the summands separately. Consider therefore an ample divisor in $\mathcal{F}_X$, and pull it back under $\varphi$. Call the resulting divisor $\omega$ in $A^1(\widehat{\mathcal{F}}_{2,n})$. Similarly, let $\omega'$ denote the pullback under $\varphi'$. Let $\text{deg}(a.b)= \mathscr{T}_{\widehat{\mathcal{F}}_{2,n}}.a.b $, and observe that
\[\mathrm{deg}(\omega^2+\omega'^2)=2\mathrm{deg}(\omega^2)=2\mathrm{deg}(\omega'^2)>0,\]
and that $\mathrm{deg}(\omega\omega')=0$ as $\omega$ is linear on $V'$ and $\omega'$ is linear on $V$. 
Hence, the pairing 
\[A^1(\widehat{\mathcal{F}}_{2,n})\times A^1(\widehat{\mathcal{F}}_{2,n})\ \longrightarrow\ \mathbb{R}\]
given by sending two classes $a$, $b$ to
$\mathrm{deg}(ab)$, has two positive eigenvalues. {Accordingly, for}  
$$
\widehat{\mathcal{F}}_{k,n}:= \RR^{k-2} \times \widehat{\mathcal{F}}_{2,n} \subseteq \RR^{n+k-2},
$$
{ the toric variety $(\mathbb{P}^1)^{k-2} \times X_{\Sigma}$ is compatible to $\widehat{\mathcal{F}}_{k,n},$ whenever $X_{\Sigma}$ is compatible to $\widehat{\mathcal{F}}_{2,n}.$ In consequence, the intersection matrix $[(\mathbb{P}^1)^{k-2}.\{\widehat{\mathcal{F}}_{k,n}\}.D_i.D_j]_{ij} $ has two positive eigenvalues. }

\end{enumerate}
To summarize, for each pair of integers $n\geq 4,$ and $2\leq k \leq n-2$, we have constructed a  positive closed extremal current $\TS_{\widehat{\mathcal{F}}_{k,n}}$, which does not satisfy the Hodge index theorem, and the proof is complete. 
\end{proof}

\section{Boundaries of mollified currents}

Andreotti and Grauert in \cite{Andreotti-Grauert} showed that existence of a exhaustion smooth functions $\varphi$ on a complex manifold $X$ implies certain vanishing theorems related to the number of strictly positive eigenvalues for the Leviform of $\varphi$. In particular, the vanishing theorems are implied if $\Omega$ can be approximated by sub-level sets of $\varphi$, which, by Sard's Lemma, can be assumed to have smooth boundaries. Diederich and Fornaess in \cite{Diederich-Fornaess} observed that if one replaces the smooth function $\varphi$ with supremum of a finite number of smooth some weaker vanishing theorem can be deduced as one loses some directions of positivity. 

In the same spirit, interpreting global convexity properties as a vanishing theorem, observing that the global and local notions of convexity are equivalent for domains with smooth boundary, and that the higher convexity properties of open sets passes over the Hausdorff limit, the non-$(n-k)$-convex examples of Section 5, give rise to obstruction for exhaustion of \emph{$k$-pseudoconvex} domain with $k$-pseudoconvex domains which have smooth boundaries. {This is surprising since "sub-level" set of support of any positive smooth form is smooth by Sard's lemma, and there is no restriction on the choice of the mollifiers.} 

We first need the following lemma (See also \cite[Lemma 1.1]{Nisse-Sottile}).

\begin{lemma}\label{useful-lemma}
Suppose that $\{A_i \}$ is a sequence of closed subsets of $\RR^n,$ converging to the closed set $A$ in Hausdorff metric. Let $Z$ be a compact set which is disjoint from $A,$ then there exists a positive integer $N$ such that $A_i \cap Z = \emptyset$ for $i>N.$ 
\end{lemma}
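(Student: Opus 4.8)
The plan is to reduce the statement to the elementary fact that the distance from a compact set to a disjoint closed set is strictly positive, and then to feed this distance into the definition of local Hausdorff convergence.

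First I would set $\delta := \inf_{z \in Z,\, a \in A} |z-a|$. The function $z \mapsto \mathrm{dist}(z,A)$ is continuous on $\RR^n$, it is strictly positive at each point of $Z$ since $A$ is closed and $Z \cap A = \emptyset$, and it attains its infimum over the compact set $Z$; hence $\delta > 0$. Next I would fix a compact neighbourhood of $Z$, for instance $K := \{x \in \RR^n : \mathrm{dist}(x,Z) \le 1\}$, which is closed and bounded, hence compact.

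Now recall that $A_i \to A$ in the Hausdorff metric on compact subsets of $\RR^n$ means in particular that for every compact $K' \subseteq \RR^n$ and every $\varepsilon > 0$ there is an index beyond which $A_i \cap K' \subseteq \{x : \mathrm{dist}(x,A) < \varepsilon\}$. Applying this with $K' = K$ and $\varepsilon = \delta$ produces an $N$ such that $A_i \cap K \subseteq \{x : \mathrm{dist}(x,A) < \delta\}$ for all $i > N$. Since $Z \subseteq K$ while every $z \in Z$ satisfies $\mathrm{dist}(z,A) \ge \delta$ by the choice of $\delta$, the set $Z$ is disjoint from $\{x : \mathrm{dist}(x,A) < \delta\}$, and therefore
\[
A_i \cap Z \;\subseteq\; A_i \cap K \cap Z \;\subseteq\; \{x : \mathrm{dist}(x,A) < \delta\} \cap Z \;=\; \emptyset
\]
for every $i > N$, which is the assertion.

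There is essentially no hard step here; the only point requiring care is the bookkeeping in the definition of Hausdorff convergence ``with respect to compact subsets'' — one must work inside a genuinely compact window and invoke only the half of the convergence statement asserting that $A_i$ does not eventually escape a $\delta$-neighbourhood of $A$. An equivalent alternative is the contrapositive: if $A_i \cap Z \neq \emptyset$ for infinitely many $i$, choose $x_i \in A_i \cap Z$, pass by compactness of $Z$ to a subsequence $x_{i_j} \to x \in Z$, and apply local Hausdorff convergence on a fixed closed ball around $x$ to conclude $\mathrm{dist}(x,A) = 0$, i.e. $x \in A \cap Z$, contradicting the hypothesis.
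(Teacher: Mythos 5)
Your proof is correct and uses essentially the same idea as the paper's: the distance from the compact set $Z$ to the disjoint closed set $A$ is strictly positive, and local Hausdorff convergence together with compactness of $Z$ then forces $A_i$ to avoid $Z$ for all large $i$. The paper phrases this via a pointwise estimate $d(z,A_i)>\epsilon$ at each $z\in Z$ followed by a compactness step, while you extract the uniform distance $\delta>0$ up front and then apply the convergence inside a fixed compact window --- just a cleaner reorganization of the same argument.
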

\begin{proof} 
By definition, for every $z\in Z,$ and $\epsilon>0,$ there exists a positive integer $N_z$ such that $d(z, A_i)>\epsilon,$ for $i> N_z.$ Now the statement follows from compactness of $Z$.
\end{proof}

\begin{lemma}\label{k-convergence}
Assume that $\{A_i\}$ is a sequence of subsets of $\RR^n$ with  $A\subseteq A_i$ for all $i,$  converging to $A$ in Hausdorff metric. If the complements $\RR^n \setminus A_i$ are $k$-convex, then $\RR^n \setminus A$ is also $k$-convex.  
\end{lemma}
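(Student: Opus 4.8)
\textbf{Proof strategy for Lemma~\ref{k-convergence}.}

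The plan is to show directly that for every affine $k$-plane $L$, the map $i_L:\widetilde{H}_{k-1}(L\cap(\RR^n\setminus A))\to\widetilde{H}_{k-1}(\RR^n\setminus A)$ is injective, by ``pushing'' a cycle and a witnessing chain of its triviality into one of the $\RR^n\setminus A_i$, where $k$-convexity is available by hypothesis. First I would fix $L$ and a reduced $(k-1)$-cycle $c$ in $L\cap(\RR^n\setminus A)$ that bounds in $\RR^n\setminus A$, say $c=\partial e$ for a $k$-chain $e$ in $\RR^n\setminus A$. Both $c$ and $e$ have compact support, and since $\RR^n\setminus A$ is open, the compact set $Z:=\mathrm{Supp}(c)\cup\mathrm{Supp}(e)$ is disjoint from $A$. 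Now $A\subseteq A_i$ and $A_i\to A$ in Hausdorff metric, so by Lemma~\ref{useful-lemma} there is an $N$ with $A_i\cap Z=\emptyset$ for all $i>N$; fix such an $i$. Then $c$ is a $(k-1)$-cycle in $L\cap(\RR^n\setminus A_i)$ and $e$ witnesses that $c$ bounds in $\RR^n\setminus A_i$, so $[c]=0$ in $\widetilde{H}_{k-1}(L\cap(\RR^n\setminus A_i))$ by $k$-convexity of $\RR^n\setminus A_i$. That is, $c=\partial f$ for some $k$-chain $f$ supported in $L\cap(\RR^n\setminus A_i)\subseteq L\cap(\RR^n\setminus A)$, since $A\subseteq A_i$. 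Hence $[c]=0$ already in $\widetilde{H}_{k-1}(L\cap(\RR^n\setminus A))$, giving injectivity of $i_L$.

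One technical point to address is which homology theory is intended in Definition~\ref{def-global}; I would take singular homology, in which case the above argument is literally correct because singular chains have compact support and the inclusion $\RR^n\setminus A_i\hookrightarrow\RR^n\setminus A$ (valid since $A\subseteq A_i$) carries the relevant chains. If instead one works with \v{C}ech or Borel--Moore homology, the same compact-support reduction still applies to the representing cycles and bounding chains after noting that all sets in sight are locally contractible (the $A_i$ and $A$ are closed, and in the intended application they are tropical fans, hence triangulable), so no essential change is needed.

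The main obstacle — really the only place any care is required — is the logical direction of the inclusions: $k$-convexity is a statement about the \emph{complement} being large, and one needs $\RR^n\setminus A_i\subseteq\RR^n\setminus A$, i.e. $A\subseteq A_i$, which is exactly the hypothesis. Without the assumption $A\subseteq A_i$ the argument collapses, since a chain supported away from $A_i$ need not avoid $A$; this is precisely why the lemma is stated with a decreasing (in the Hausdorff sense) exhausting family rather than an arbitrary convergent one. Everything else is a routine application of Lemma~\ref{useful-lemma} together with compactness of supports of cycles and chains, so I do not anticipate further difficulties.
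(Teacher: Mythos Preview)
Your proof is correct and follows essentially the same route as the paper's: both use compactness of the supports together with Lemma~\ref{useful-lemma} to push the cycle and its bounding chain into $\RR^n\setminus A_i$, invoke $k$-convexity there, and then use the inclusion $\RR^n\setminus A_i\hookrightarrow \RR^n\setminus A$ (from $A\subseteq A_i$) to conclude. The only cosmetic differences are that you argue directly while the paper argues by contradiction, and that you use the indices $(k-1,\,k)$ matching Definition~\ref{def-global} whereas the paper's proof shifts to $(k,\,k+1)$.
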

\begin{proof}
Suppose that $\RR^n \setminus A$ is not $k$-convex, \textit{i.e.}, 
$$
i_L:\widetilde{H}_{k}(L \setminus A)\rightarrow \widetilde{H}_{k}(\RR^n \setminus A), 
$$
is  not injective for an affine $k+1$ dimensional plane $L$. In other words, there exists $0\not=[c]\in \widetilde{H}_k(L\setminus A)$ such that $[c]=\partial [Z],$ for a $(k+1)$ chain $Z$ in $\RR^n \setminus A.$ Since the support of the chain $Z$ is compact, by Lemma~\ref{useful-lemma} we can find a positive integer $N$ that for $i>N,$ $A_i \cap Z = \emptyset.$ Therefore, $[c]= \partial[Z]$ also as an element of $\widetilde{H}_k(\RR^n\setminus A_i).$ Moreover, as $A\subset A_i,$ $[c]\not= 0$ in $\widetilde{H}_k(L\setminus A_i)$ either.  Consequently, $A_i$ cannot be $k$-convex, which is a contradiction.
\end{proof}

Now recall that the standard mollification technique (See for instance \cite[Chapter III, Remark 1.15]{DemaillyBook1} ) of a positive  $(p,p)$ current $\TS$ convoluted with a family of mollifiers ${\mu_t},$ $t\in (0,1),$ yields a family of $(p,p)$ smooth positive forms $\mu_t \star \TS$ such that 
\begin{itemize}
\item $\mu_t \star \TS$ converges to $\TS$ in sense of currents, as $t\to 0$, and 
\item $\text{Supp} (\mu_t \star \TS)$ converges to $\text{Supp} (\TS)$ in Hausdorff sense as $t\to 0.$
\end{itemize}

\begin{theorem}\label{thm-non-moli}
For any pair of integers $n\geq 4,$ and $2\leq k \leq n-2,$ there is a closed positive current $\mathscr{T}\in \mathscr{D}'_{k,k}((\CC^*)^n),$ for which there is no family of mollifiers $\mu_t$ with 
\begin{itemize}
\item[(i)] $\text{Supp} (\mu_t \star \mathscr{T})$ converges to $\text{Supp} (\mathscr{T})$ in 
 sense as $t\to 0$, and
\item [(ii)] $\text{Supp} (\mu_t\star \mathscr{T})$ have smooth boundaries. 
\end{itemize}
\end{theorem}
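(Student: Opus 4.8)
The plan is to take $\mathscr{T}$ to be (the extension to $(\CC^*)^n$ of) the tropical current $\mathscr{T}_{\widehat{\mathcal{F}}_{k,n}}$ constructed in Section~\ref{sec-counterexample}, whose support is $\Log^{-1}(\widehat{\mathcal{F}}_{k,n})$, and to argue by contradiction: suppose such a family of mollifiers $\mu_t$ exists. Observe first that $\text{Supp}(\mu_t \star \mathscr{T}) \supseteq \text{Supp}(\mathscr{T})$, since a standard mollifier is supported near the origin and convolution of a positive current with a positive kernel cannot annihilate the support (this is the content of the set-up before the theorem, where $\mu_t \star \mathscr{T}$ is positive and converges weakly to $\mathscr{T}$). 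Thus the open sets $\Omega_t := (\CC^*)^n \setminus \text{Supp}(\mu_t \star \mathscr{T})$ form a decreasing family of open sets with smooth boundary, contained in $\Omega := (\CC^*)^n \setminus \text{Supp}(\mathscr{T})$, with $\text{Supp}(\mu_t\star\mathscr{T}) \to \text{Supp}(\mathscr{T})$ in the Hausdorff sense as $t \to 0$.

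Next I would transport the problem to $\RR^{n+k-2}$ via $\Log$. Applying $\Log$ (which is a proper, open, continuous map with contractible fibers) we get closed sets $A_t := \Log(\text{Supp}(\mu_t \star \mathscr{T})) \supseteq \widehat{\mathcal{F}}_{k,n} =: A$ converging to $A$ in Hausdorff metric, with $\RR^{n+k-2}\setminus A_t$ homotopy equivalent to $\Omega_t$. Since $\text{Supp}(\mu_t\star\mathscr{T})$ is a smooth real hypersurface-bounded region in $(\CC^*)^n$, each $\Omega_t$ is an open set with smooth boundary; by Proposition~\ref{local-global}(2) the local and global notions of $(n-k)$-convexity coincide for $\Omega_t$. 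Now by Theorem~\ref{FS-concave} applied to the positive closed current $\mu_t \star \mathscr{T}$ — or more directly, since the smoothness of the boundary lets us invoke the Morse-theoretic equivalence — the set $\Omega_t$ is locally, hence globally, $(n-k)$-convex: the cleanest route is that $\text{Supp}(\mu_t\star\mathscr{T})$ is $k$-pseudoconcave by Fornaess--Sibony, so by Corollary~\ref{log-pseudo} its $\Log$-image has no supporting $(n-k)$-caps, i.e. $\RR^{n+k-2}\setminus A_t$ is locally $(n-k)$-convex, and smoothness upgrades this to global $(n-k)$-convexity.

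Then I would apply Lemma~\ref{k-convergence} with the sequence $A_t$ (passing to a countable subsequence $t_i \to 0$): since $A \subseteq A_{t_i}$, the $A_{t_i}$ converge to $A$ in Hausdorff metric, and each $\RR^{n+k-2}\setminus A_{t_i}$ is $(n-k)$-convex, we conclude that $\RR^{n+k-2}\setminus A = \RR^{n+k-2}\setminus\widehat{\mathcal{F}}_{k,n}$ is $(n-k)$-convex. But Theorem~\ref{thm-examples} (equivalently Theorem~B) says precisely that $\RR^{n+k-2}\setminus\widehat{\mathcal{F}}_{k,n}$ is \emph{not} $(n-k)$-convex — a contradiction. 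Hence no such family of mollifiers exists, which proves the theorem; note that the statement is phrased for $\RR^n$ after relabeling $n+k-2$ as the ambient dimension, so a remark reconciling the indices may be warranted.

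The main obstacle I expect is the smoothness-to-global-convexity step: asserting that $\Omega_t$, being a domain with smooth boundary in $(\CC^*)^n$, satisfies \emph{global} $(n-k)$-convexity rather than merely the local (no-cap) version. Lemma~\ref{Rash-caps} and Corollary~\ref{log-pseudo} only deliver the absence of supporting caps (local convexity), and Proposition~\ref{local-global}(2) bridges local and global convexity only for smooth boundaries — so one must be careful that the \emph{image} $A_t = \Log(\text{Supp}(\mu_t\star\mathscr{T}))$ indeed has smooth boundary in $\RR^{n+k-2}$, which follows because $\Log$ restricted to $(\CC^*)^n$ is a submersion onto $\RR^{n+k-2}$ and hypothesis (ii) gives smooth boundary upstairs. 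A secondary point to handle with care is the inclusion $\text{Supp}(\mathscr{T}) \subseteq \text{Supp}(\mu_t\star\mathscr{T})$, needed to apply Lemma~\ref{k-convergence}; this should be justified from positivity of $\mathscr{T}$ and of the mollifying kernel, together with the fact that a standard mollifier has strictly positive mass in every neighborhood of $0$.
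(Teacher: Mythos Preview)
Your proposal is essentially the paper's own proof: take $\mathscr{T}=\mathscr{T}_{\widehat{\mathcal{F}}_{k,n}}$, assume such mollifiers exist, use Fornaess--Sibony (Theorem~\ref{FS-concave}) plus Corollary~\ref{log-pseudo} to get that the $\Log$-image $A_t$ has no $(n-k)$-caps, upgrade to global $(n-k)$-convexity via Proposition~\ref{local-global}(2) using the smooth boundary, and then apply Lemma~\ref{k-convergence} to contradict Theorem~\ref{thm-examples}.

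One technical point where the paper's treatment differs from yours, and which addresses exactly the obstacle you flag: rather than defining $A_t$ as the $\Log$-image and then arguing that a submersion sends smooth-bounded sets to smooth-bounded sets (which is not true in general---projections can create singular boundary points), the paper simply asserts that $\text{Supp}(\mu_t\star\mathscr{T})$ is already of the form $\Log^{-1}(A_t)$. This torus-invariance of the support is what makes both Corollary~\ref{log-pseudo} and the smoothness of $\partial A_t$ immediate; it holds because the standard mollifiers in this context (cf.\ the paragraph before the theorem, referencing \cite{DemaillyBook1}) may be taken invariant under the compact torus $(S^1)^n$, so that convolution preserves the tube structure $\Log^{-1}(\cdot)$ of the tropical current's support. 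Your ``submersion'' justification for the smoothness of $\partial A_t$ should be replaced by this observation.
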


\begin{proof}
For such $n$ and $k$ consider $\mathscr{T}= \TS_{\widehat{\mathcal{F}}_{k,n}},$ the $(k,k)$-dimensional tropical current associated to the balanced fan $\widehat{\mathcal{F}}_{k,n}$ constructed in Section~\ref{sec-counterexample}. Suppose by contradiction that both statements of the theorem hold. As $\text{Supp} (\mu_t\star \mathscr{T})$ is of the form $\Log^{-1}(A)$ for some closed set $A \subseteq \RR^n$, then by Lemma~\ref{FS-concave} and Corollary~\ref{log-pseudo}, $A$ has no $(n-k)$-caps. Since for each $t$ the boundary of $\text{Supp} (\mu_t\star \TS): =\Log^{-1}(A_t)$ is smooth, by Proposition~\ref{local-global} the complements $\RR^n \setminus A_t$ are $(n-k)$-convex, and by Lemma~\ref{k-convergence} $\RR^n \setminus \widehat{\mathcal{F}}_{k,n}$ must be $(n-k)$-convex, which is a contradiction. 
\end{proof}

\providecommand{\bysame}{\leavevmode\hbox to3em{\hrulefill}\thinspace}
\providecommand{\MR}{\relax\ifhmode\unskip\space\fi MR }
\providecommand{\MRhref}[2]{%
  \href{http://www.ams.org/mathscinet-getitem?mr=#1}{#2}
}
\providecommand{\href}[2]{#2}


\begin{thebibliography}{AHK15}

\bibitem[AB14]{AB}
Karim Adiprasito and Anders Bj\"orner, \emph{Filtered geometric lattices and
  {L}efschetz {S}ection {T}heorems over the tropical semiring},
  arXiv:1401.7301, Jan 2014.

\bibitem[AHK15]{AHK}
Karim Adiprasito, June Huh, and Eric Katz, \emph{Hodge theory for combinatorial
  geometries}, arXiv:1511.02888, November 2015.

\bibitem[AS16]{AS}
Karim {Adiprasito} and Raman {Sanyal}, \emph{{Relative Stanley-Reisner
  theory and upper bound theorems for Minkowski sums.}}, {Publ. Math., Inst.
  Hautes \'Etud. Sci.} \textbf{124} (2016), 99--163 (English).

\bibitem[AG62]{Andreotti-Grauert}
Aldo Andreotti and Hans Grauert, \emph{Th\'eor\`eme de finitude pour la
  cohomologie des espaces complexes}, Bull. Soc. Math. France \textbf{90}
  (1962), 193--259.

\bibitem[Aum36]{Aumann}
Georg Aumann, \emph{On a topological characterization of compact convex point
  sets}, Annals of Mathematics \textbf{37} (1936), no.~2, 443--447.

\bibitem[Bab14]{Babaee}
Farhad Babaee, \emph{Complex tropical currents}, Ph.D. thesis, Departments of
  Mathematics, Universities of Bordeaux and Padova, 2014. Available at https://www.researchgate.net/publication/271444657\_Complex\_Tropical\_Currents

\bibitem[BH17]{BH}
Farhad Babaee and June Huh, \emph{A tropical approach to a generalized {H}odge
  conjecture for positive currents}, Duke Math. J. \textbf{166} (2017), no.~14,
  2749--2813. \MR{3707289}

\bibitem[BGS11]{Gil-Sombra}
Jos{\'e}~Ignacio Burgos~Gil and Mart{\'{\i}}n Sombra, \emph{When do the
  recession cones of a polyhedral complex form a fan?}, Discrete Comput. Geom.
  \textbf{46} (2011), no.~4, 789--798. \MR{2846179 (2012j:14070)}

\bibitem[BT12]{Bushueva-Tsikh}
N.~A. Bushueva and A.~K. Tsikh, \emph{On amoebas of algebraic sets of higher
  codimensions}, Tr. Mat. Inst. Steklova \textbf{279} (2012),
  no.~Analiticheskie i Geometricheskie Voprosy Kompleksnogo Analiza, 59--71.
  \MR{3086757}

\bibitem[Dem]{DemaillyBook1}
Jean-Pierre Demailly, \emph{Complex analytic and differential geometry}, Open
  Content Book, Version of June 21, 2012,
  http://www.fourier.ujf-grenoble.fr/$\sim$demailly/books.html.

\bibitem[Dem82]{DemaillyHodge}
\bysame, \emph{Courants positifs extr\^emaux et conjecture de {H}odge}, Invent.
  Math. \textbf{69} (1982), no.~3, 347--374. \MR{679762 (84f:32007)}

\bibitem[Dem12]{DemaillyBook2}
\bysame, \emph{Analytic methods in algebraic geometry}, Surveys of Modern
  Mathematics, vol.~1, International Press, Somerville, MA; Higher Education
  Press, Beijing, 2012. \MR{2978333}

\bibitem[DF85]{Diederich-Fornaess}
Klas Diederich and John~Erik Fornaess, \emph{Smoothing {$q$}-convex functions
  and vanishing theorems}, Invent. Math. \textbf{82} (1985), no.~2, 291--305.

\bibitem[FS95]{Fornaess-Sibony}
John~Erik Fornaess and Nessim Sibony, \emph{Oka's inequality for currents and
  applications}, Math. Ann. \textbf{301} (1995), no.~3, 399--419.

\bibitem[FS97]{Fulton-Sturmfels}
William Fulton and Bernd Sturmfels, \emph{Intersection theory on toric
  varieties}, Topology \textbf{36} (1997), no.~2, 335--353. \MR{1415592
  (97h:14070)}

\bibitem[Gro91]{Gromov}
Mikhael Gromov, \emph{Sign and geometric meaning of curvature}, Rend. Sem. Mat.
  Fis. Milano \textbf{61} (1991), 9--123 (1994).

\bibitem[Hen04]{Henriques}
Andr\'e Henriques, \emph{An analogue of convexity for complements of amoebas of
  varieties of higher codimension, an answer to a question asked by {B}.
  {S}turmfels}, Adv. Geom. \textbf{4} (2004), no.~1, 61--73.

\bibitem[Mik04]{Mikh-amoeba}
Grigory Mikhalkin, \emph{Amoebas of algebraic varieties and tropical geometry},
  Different faces of geometry, Int. Math. Ser. (N. Y.), vol.~3, Kluwer/Plenum,
  New York, 2004, pp.~257--300. \MR{2102998}

\bibitem[NS16]{Nisse-Sottile}
Mounir Nisse and Frank Sottile, \emph{Higher convexity for complements of
  tropical varieties}, Math. Ann. \textbf{365} (2016), no.~1-2, 1--12.

\bibitem[Ras09]{Rash-amoeba}
Alexander Rashkovskii, \emph{A remark on amoebas in higher codimensions},
  Analysis and mathematical physics, Trends Math., Birkh\"auser, Basel, 2009,
  pp.~465--471. \MR{2724627 (2012a:32010)}

\end{thebibliography}
\end{document}